\newcommand{\F}{\mathbb {F}}
\newtheorem{theorem}{Theorem}[section]
\newtheorem{definition}[theorem]{Definition}
\newtheorem{lemma}[theorem]{Lemma}
\newtheorem{proposition}[theorem]{Proposition}
\newtheorem{remark}[theorem]{Remark}
\begin{document}

\title[Pairs of $r$-primitive and $k$-normal elements]{Pairs of $r$-primitive and $k$-normal elements
in finite fields}

\author{Josimar J.R. Aguirre and Victor G.L. Neumann}

\maketitle


\vspace{8ex}
\noindent
\textbf{Keywords:} $r$-primitive element, $k$-normal element, normal basis, finite 
fields.\\
\noindent
\textbf{MSC:} 12E20, 11T23

\begin{abstract}
Let $\mathbb{F}_{q^n}$ be a finite field with $q^n$ elements and $r$ be a positive divisor of $q^n-1$. An element $\alpha \in \mathbb{F}_{q^n}^*$ is called \textit{$r$-primitive} if its multiplicative order is $(q^n-1)/r$. Also, $\alpha \in \mathbb{F}_{q^n}$ is \textit{$k$-normal} over $\mathbb{F}_q$ if the
greatest common divisor of the polynomials $g_{\alpha}(x) = \alpha x^{n-1}+ \alpha^q x^{n-2} +  \ldots + \alpha^{q^{n-2}}x + \alpha^{q^{n-1}}$ and $x^n-1$ in $\mathbb{F}_{q^n}[x]$ has degree $k$. These concepts generalize the ideas of primitive and normal elements, respectively. In this paper, we consider non-negative integers $m_1,m_2,k_1,k_2$, positive integers $r_1,r_2$ and rational functions $F(x)=F_1(x)/F_2(x) \in \mathbb{F}_{q^n}(x)$ with $\deg(F_i) \leq m_i$ for $i\in\{ 1,2\}$ satisfying certain conditions and we present
sufficient conditions for the existence of $r_1$-primitive $k_1$-normal elements $\alpha \in \mathbb{F}_{q^n}$ over $\mathbb{F}_q$, such that $F(\alpha)$ is an $r_2$-primitive $k_2$-normal element over $\mathbb{F}_q$. Finally as an example 
we study the case where $r_1=2$, $r_2=3$, $k_1=2$, $k_2=1$, $m_1=2$ and $m_2=1$,  with $n \ge 7$.
\end{abstract}

\section{Introduction}

For a positive integer $n$ and a prime power $q$, let $\mathbb{F}_{q^n}$ be the finite field with $q^n$ elements. 
We recall that the multiplicative group $\mathbb{F}_{q^n}^*$ is cyclic, and an element $\mathbb{F}_{q^n}$ 
is called \textit{primitive} if its multiplicative order is $q^n-1$. Primitive elements have many applications
in 
cryptography (see \cite{blum}, \cite{mel}). Let $r$ be a positive divisor of 
$q^n-1$. An element $\alpha \in \mathbb{F}_{q^n}^*$ is called \textit{$r$-primitive} if its multiplicative order 
is $(q^n-1)/r$, so primitive elements in the usual sense are 1-primitive elements. In \cite{Cohen2021}, \cite{Cohen2022} the authors found a characteristic function for the $r$-primitive elements. These elements that have high multiplicative order, 
without necessarily being primitive, are of great practical interest because 
they may replace primitive elements in several
applications.

Also, $\alpha \in \mathbb{F}_{q^n}$ is normal over $\mathbb{F}_q$ if the set 
$B_{\alpha}=\{\alpha, \alpha^q, \ldots, \alpha^{q^{n-1}} \}$ spans
$\mathbb{F}_{q^n}$ as an $\mathbb{F}_q$-vector space. In this case $B_{\alpha}$ is called a normal basis. 
Normal bases have many applications in the computational theory due to their efficient implementation in finite field arithmetic \cite{gao}. We will use an equivalence to define $k$-normal elements (see \cite[Theorem 3.2]{knormal}).
An element $\alpha \in \mathbb{F}_{q^n}$ is said to be $k$-normal over $\mathbb{F}_q$ if $\alpha$ gives rise to a 
basis $\{\alpha,\alpha^{q},\ldots,\alpha^{q^{n-k-1}}\}$ of a $q$-modulus of dimension $n-k$ over $\mathbb{F}_q$.
This definition implies that normal elements in the usual sense are $0$-normal. The $k$-normal elements can be used to reduce the multiplication process in finite fields (see \cite{negre}).

There are several criteria in the literature for the existence of $k$-normal elements (see
e.g. \cite{sozaya}, \cite{lucas}, \cite{zhang}). In \cite{lucas1} the authors worked out the case $k=1$, and established the Primitive 1-Normal Theorem. In \cite{AN}, the authors showed conditions for the existence of primitive $2$-normal elements. Generalizing these ideas, in \cite{AN2}, \cite{RSS}  the authors presented conditions for the existence of $r$-primitive
$k$-normal elements in $\mathbb{F}_{q^n}$ over $\mathbb{F}_q$.

The study of pairs of elements with certain properties has begun with the work of Cohen and Huczynska. 
They showed the Strong Normal Basis Theorem (see \cite{CH2}) which states
that, except for a few pairs $(q,n)$, one can find an element $\alpha \in \mathbb{F}_{q^n}$ such that $\alpha$ 
and $\alpha^{-1}$ are primitive and normal over $\mathbb{F}_{q}$.  Later, Kapetanakis (see \cite{kap}) proved that there exists an element $\alpha \in \mathbb{F}_{q^n}$ such that $\alpha$ and $(a\alpha+b)/(c\alpha+d)$, 
with $a, b, c, d \in \mathbb{F}_q$, are primitive and normal over $\mathbb{F}_q$, except for a
few combinations of $q,n$ and $a, b, c, d$. A few years ago, many results in this sense were obtained for particular cases of the characteristic of the field, as well as on the degrees of a rational function, i.e, conditions for which $(\alpha,F(\alpha))$ is a pair formed by primitive and normal elements (see \cite{anju}, \cite{haz}, \cite{haz2}, \cite{cgnt2}). In \cite{rania}, the authors showed conditions for the existence of an element $\alpha \in \mathbb{F}_{q^n}$ such that $\alpha$ and $\alpha^{-1}$ are $r$-primitive $k$-normal elements over $\mathbb{F}_q$.

In this paper, we study conditions for the existence of pairs $(\alpha, F(\alpha))$ with $\alpha \in \mathbb{F}_{q^n}$ and 
$F(x) \in \mathbb{F}_{q^n}(x)$ satisfying certain conditions, such that $\alpha$ is an $r_1$-primitive $k_1$-normal element over $\mathbb{F}_q$ and $F(\alpha)$ is an $r_2$-primitive $k_2$-normal element over $\mathbb{F}_{q}$, where $r_1,r_2$ are positive divisors of $q^n-1$ and $k_1,k_2$ are the degrees of some polynomials over $\mathbb{F}_q$ that divide $x^n-1$. This paper is organized as follows.
In Section 2, we provide background material that is used along
the paper. In Section 3, we present the general condition for the existence of these pairs of $r$-primitive
$k$-normal elements in $\mathbb{F}_{q^n}$ over $\mathbb{F}_q$, as well as another derived condition using sieve method. 
In Section 4, we provide some numerical results.

\section{Preliminaries}

In this section, we present some definitions and results required in
this paper.
We refer the reader to \cite{LN} 
for
basic results on finite fields.

\begin{definition}\label{def-upsilon}
For $m_1,m_2 \in \mathbb{N}$, define $\Upsilon_q(m_1,m_2)$ as the set of rational functions 
$\frac{F_1}{F_2} \in \mathbb{F}_q(x)$ such that:
\begin{enumerate}
\item $\deg(F_1) \leq m_1, \deg(F_2) \leq m_2$;
\item $\gcd(F_1,F_2)=1$;
\item there exist $m \in \mathbb{N}$ and an irreducible monic polynomial $g \in \mathbb{F}_q[x] \setminus \{x\}$ 
such that $\gcd(m,q-1)=1$, $g^m \mid F_1F_2$ and $g^{m+1} \nmid F_1F_2$.  
\end{enumerate}
\end{definition}

For a positive integer $m$, $\phi(m)$ denotes the Euler totient function, $\mu(m)$ denotes
the M\"obius function, $\pi(m)$ denotes the number of primes less than or equal to $m$ and
$\textrm{rad}(m)$ denotes the greatest square-free divisor of $m$.
\begin{definition}
\begin{enumerate}
\item[(a)] Let $f(x)\in \mathbb{F}_{q}[x]$. The Euler totient function for polynomials over $\mathbb{F}_q$ is given by
$$
\Phi_q(f)= \left| \left( \dfrac{\mathbb{F}_q[x]}{\langle f \rangle} \right)^{*} \right|,
$$
where $\langle f \rangle$ is the ideal generated by $f(x)$ in $\mathbb{F}_q[x]$.
\item[(b)] If $t$ is a positive integer (or a monic polynomial over $\mathbb{F}_q$), $W(t)$ denotes the number of square-free (monic) divisors of $t$.
\item[(c)] If $f(x)\in \mathbb{F}_{q}[x]$ is a monic polynomial, the polynomial M\"obius function $\mu_q$ is given by $\mu_q(f)=0$ if $f$ is not square-free and $\mu_q(f)=(-1)^r$ if $f$ is a product of $r$ distinct monic irreducible factors over $\mathbb{F}_q$.
\end{enumerate}
\end{definition}

\subsection{Freeness and characters.}
We present the concept of \textit{freeness}, introduced by Carlitz \cite{carlitz} and Davenport \cite{davenport1},
and refined by Lenstra and Schoof (see \cite{lenstra}). This 
concept is useful in the construction of certain characteristic functions over finite fields.

The additive group $\mathbb{F}_{q^n}$ is an $\mathbb{F}_q[x]$-module where the action
is given by $f \circ \alpha =\displaystyle  \sum_{i=0}^r a_i \alpha^{q^i}$
for $f=\displaystyle \sum_{i=0}^r a_ix^i\in \mathbb{F}_q[x]$.
An element $\alpha \in \F_{q^n}$ has $\F_q$-order $h\in \mathbb{F}_q[x]$
if $h$ is the lowest degree monic polynomial such that $h \circ \alpha=0$.
The $\mathbb{F}_q$-order of $\alpha$ will be denoted by $\mathrm{Ord} (\alpha)$.
It is known that the $\F_q$-order of an element $\alpha \in \F_{q^n}$
divides $x^n-1$. 
An additive character $\chi$ 
of $\F_{q^n}$ is a
group homomorphism of $\mathbb{F}_{q^n}$ to $\mathbb{C}^*$.
The
group of additive characters $\widehat{\mathbb{F}}_{q^n}$ becomes 
an $\mathbb{F}_q[x]$-module by defining
$f\circ \chi (\alpha)=\chi(f \circ \alpha)$ for $\chi \in \widehat{\mathbb{F}}_{q^n}$.
An additive character $\chi$ has $\F_q$-order $h \in \mathbb{F}_q[x]$
if $h$ is the monic polynomial of smallest degree such
that $h \circ \chi$
is the trivial additive character.
The $\mathbb{F}_q$-order of $\chi$ will be denoted by $\mathrm{Ord} (\chi)$.

Let $g\in \mathbb{F}_q[x]$ be a  divisor of $x^n-1$. We say that an element
$\alpha \in \mathbb{F}_{q^n}$ is $g$-free
if for every polynomial $h \in \mathbb{F}_q[x]$  such that $h \mid g$ and $h\neq 1$, there is no element
$\beta \in \F_{q^n}$ satisfying $\alpha= h \circ \beta$.
From \cite[Theorem 13.4.4.]{galois}, we have that
for any $\alpha \in \F_{q^n}$ we get
$$
\Omega_g(\alpha)=
\Theta(g) \int_{h|g} \chi_h(\alpha)
=\left\{
\begin{array}{ll}
	1 \quad & \text{if } \alpha \text{ is } g\text{-free}, \\
	0            & \text{otherwise,}
\end{array}
\right.
$$
where 
$\Theta(g)= \frac{\Phi_q(g)}{q^{\deg(g)}}$,
$\displaystyle\int_{h|g} \chi_h$ denotes the sum
$\displaystyle \sum_{h|g} \frac{\mu_q(h)}{\Phi_q(h)} \sum_{(h)} \chi_h$,
$\displaystyle \sum_{h|g}$ runs over all the monic divisors $h\in \mathbb{F}_q[x]$ of $g$,
$\chi_h$ is an additive character of $\F_{q^n}$ and the sum
$\displaystyle \sum_{(h)} \chi_h$ runs over all additive characters of $\mathbb{F}_q$-order $h$.
It is known that there exist $\Phi_q(h)$ of those characters.
It is well known that an element $\alpha \in \mathbb{F}_{q^n}$ is normal if and only if $\alpha$ is $(x^n-1)$-free.

A multiplicative character $\eta$ 
of $\F_{q^n}^*$ is a
group homomorphism of $\F_{q^n}^*$ to $\mathbb{C}^*$.
The group of multiplicative characters $\widehat{\mathbb{F}}_{q^n}^*$ becomes 
a $\mathbb{Z}$-module by defining
$\eta^r(\alpha)=\eta(\alpha^r)$ for $\eta \in \widehat{\mathbb{F}}_{q^n}^*$,
$\alpha \in \mathbb{F}_{q^n}^*$ and $r \in \mathbb{Z}$.
The order of a multiplicative character $\eta$ is the least positive integer $d$ such that
$\eta (\alpha)^d=1$. 


There are some works which characterize $r$-primitive elements of $\mathbb{F}_{q^n}$ using characters,
like \cite{Cohen2021} and \cite{Cohen2022}. We will follow \cite{Cohen2022} and, as in that work, for positive integers $a$ and $b$, we set $a_{(b)}=\frac{a}{\gcd(a,b)}$.

\begin{definition}{\cite[Definition 3.1]{Cohen2022}}
	For a divisor $r$ of $q^n-1$
	and a divisor $R$ of $\frac{q^n-1}{r}$, let $\mathcal{C}_r$ be the cyclic multiplicative subgroup of $\mathbb{F}_{q^n}^*$
	of order $\frac{q^n-1}{r}$. We say that an element $\alpha \in  \mathbb{F}_{q^n}^*$ is $(R,r)$-free if
	$\alpha \in \mathcal{C}_r$ and $\alpha$ is $R$-free in $\mathcal{C}_r$, i.e., 
	if $\alpha=\beta^s$ with $\beta \in \mathcal{C}_r$ and $s \mid R$, then $s=1$.
\end{definition}

\begin{remark}
	An element $\alpha \in  \mathbb{F}_{q^n}^*$ is $r$-primitive if and only if $\alpha$ is $(\frac{q^n-1}{r},r)$-free.
\end{remark}

Let $\mathbb{I}_{R,r}$ be the characteristic function of $(R,r)$-free elements of $\mathbb{F}_{q^n}^*$, i.e.
$$
\mathbb{I}_{R,r}(\alpha) =
\left\{
\begin{array}{ll}
	1, \quad & \text{if } \alpha \text{ is } (R,r)\text{-free}, \\
	0,            & \text{otherwise.}
\end{array}
\right.
$$

From \cite[Proposition 3.6]{Cohen2022}, 
for any $\alpha \in \mathbb{F}_{q^n}^*$ we get
$$\mathbb{I}_{R,r}(\alpha) = 
\frac{\theta(R)}{r} \mathop{\int}_{d_{(r)}|Rr} \eta_d(\alpha),
$$
where $\theta(R)=\frac{\phi(R)}{R}$,
$\displaystyle\mathop{\int}_{d_{(r)}|Rr} \eta_d$ stands for the sum
$\displaystyle \sum_{d|Rr} \frac{\mu(d_{(r)})}{\phi(d_{(r)})} \sum_{(d)} \eta_d$,
$\eta_d$ is a multiplicative character of $\F_{q^n}^*$  and the sum
$\displaystyle \sum_{(d)} \eta_d$ runs over all the multiplicative characters of order $d$.

To bound the sums above, we will use the following result.

\begin{lemma}{\cite[Lemma 2.5]{Cohen2022}}\label{lemmacohen2022}
	For any positive integers $R$, $r$, we have that
	$$
	\sum_{d\mid R} \frac{|\mu(d_{(r)})|}{\phi(d_{(r)})} \cdot \phi(d) = \gcd(R,r)\cdot W(\gcd(R,R_{(r)})).
	$$
\end{lemma}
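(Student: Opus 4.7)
The plan is to establish the identity prime-by-prime, exploiting that both sides factor as a product over primes dividing $R$. Write $R=\prod_p p^{a_p}$ and $r=\prod_p p^{b_p}$, so that every $d\mid R$ has the form $d=\prod_p p^{c_p}$ with $0\le c_p\le a_p$, and $d_{(r)}=\prod_p p^{\max(c_p-b_p,0)}$. Since $\phi$, $|\mu|$ and the $(\cdot)_{(r)}$ operation are multiplicative, the sum on the left splits as
\[
\sum_{d\mid R}\frac{|\mu(d_{(r)})|}{\phi(d_{(r)})}\,\phi(d) \;=\; \prod_{p\mid R} S_p,
\qquad
S_p=\sum_{c=0}^{a_p}\frac{|\mu(p^{\max(c-b_p,0)})|}{\phi(p^{\max(c-b_p,0)})}\,\phi(p^c).
\]
The right-hand side of the lemma factors similarly: $\gcd(R,r)$ has local exponent $\min(a_p,b_p)$, and $W(\gcd(R,R_{(r)}))$ contributes a factor $2$ for each prime with $a_p>b_p$ and $1$ otherwise.

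Next I would evaluate $S_p$ explicitly. The factor $|\mu(p^{\max(c-b_p,0)})|$ forces $c\le b_p+1$, so the summation range shrinks to $0\le c\le\min(a_p,b_p+1)$. For $0\le c\le b_p$ the denominator is $\phi(1)=1$ and the summand is simply $\phi(p^c)$, and the telescoping identity
\[
1+\phi(p)+\phi(p^2)+\cdots+\phi(p^m)=p^m
\]
collapses these terms to $p^{\min(a_p,b_p)}$. If in addition $a_p>b_p$ there is one extra term at $c=b_p+1$, which evaluates to $\phi(p^{b_p+1})/\phi(p)=p^{b_p}$; otherwise no such term appears.

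Finally I would match $S_p$ against the local factor on the right by splitting into two cases. When $a_p\le b_p$, both equal $p^{a_p}$ (the right side because $R_{(r)}$ has trivial local part, so $W$ contributes $1$). When $a_p>b_p$, the left becomes $p^{b_p}+p^{b_p}=2p^{b_p}$ and the right becomes $p^{b_p}\cdot 2=2p^{b_p}$, the factor of $2$ arising precisely because $\gcd(R,R_{(r)})$ now has positive $p$-adic valuation. Multiplying over all $p\mid R$ gives the identity.

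The argument is essentially bookkeeping; the only mildly delicate point is recognizing that the boundary term at $c=b_p+1$ (which the Möbius factor \emph{barely} permits) is exactly what supplies the factor of $2$ contributed by $W(\gcd(R,R_{(r)}))$ in the regime $a_p>b_p$. Once that pairing is spotted, the two sides align cleanly in both cases.
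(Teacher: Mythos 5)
Your argument is correct: the summand is multiplicative in $d$, the local sum $S_p$ collapses to $p^{\min(a_p,b_p)}$ via $\sum_{c=0}^{m}\phi(p^c)=p^m$, the boundary term at $c=b_p+1$ contributes exactly $p^{b_p}$ when $a_p>b_p$, and this matches the local factor $p^{\min(a_p,b_p)}\cdot 2^{[a_p>b_p]}$ of $\gcd(R,r)\cdot W(\gcd(R,R_{(r)}))$, since $\gcd(R,R_{(r)})$ has positive $p$-adic valuation precisely when $a_p>b_p$. Note that the paper itself gives no proof of this statement --- it is quoted verbatim from \cite[Lemma 2.5]{Cohen2022} --- so there is nothing internal to compare against; your prime-by-prime verification is a clean, self-contained substitute for the citation.
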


\begin{remark}\label{construct-k-r}
L. Reis has given a method to construct $k$-normal elements. Let $\beta \in \mathbb{F}_{q^n}$ be a normal element and $f\in \mathbb{F}_q[x]$ be a divisor of $x^n-1$ of degree $k$, then $\alpha = f \circ \beta$ is $k$-normal
(see \cite[Lemma 3.1]{lucas}). In the same way, if $\beta \in \mathbb{F}_{q^n}$ is a primitive element, then $\beta^r$ is $r$-primitive
for any divisor $r$ of $q^n-1$.
\end{remark}

We also have that $\mathbb{F}_{q^n}^*$ and $\widehat{\mathbb{F}}_{q^n}^*$
are isomorphic as $\mathbb{Z}$-modules, and
$\mathbb{F}_{q^n}$ and $\widehat{\mathbb{F}}_{q^n}$
are isomorphic as $\mathbb{F}_q[x]$-modules (see \cite[Theorem 13.4.1]{galois}).



\begin{definition}\label{char0}
For any $\alpha \in \mathbb{F}_{q^n}$, we define the following character sum:
$$
I_0(\alpha) = \frac{1}{q^n} \sum_{\psi \in \widehat{\mathbb{F}}_{q^n}} \psi(\alpha).
$$
\end{definition}
Note that $I_0(\alpha)=1$ if $\alpha=0$, and $I_0(\alpha)=0$ otherwise by the character orthogonality property.

\subsection{Estimates.} 
To finish this section, we present some estimates that are used along the next sections.



\begin{lemma}\label{case-b}\cite[Lemma 2.5]{AN2} 
Let $f \in \mathbb{F}_q[x]$ be a divisor of $x^n-1$ of degree $k$ and
let $\chi$ and $\psi$ be additive characters. Then
$$
\sum_{\beta \in \mathbb{F}_{q^n}}
\chi(\beta)\psi(f \circ \beta)^{-1} =
\left\{
\begin{array}{ll}
	q^n & \text{if } \chi = f \circ \psi, \\
	0 & \text{if } \chi \neq f \circ \psi .
\end{array}
\right.
$$
Furthermore, for a given additive character $\chi$, 
the set
$\hat{f}^{-1}(\chi)=\{\psi \in  \widehat{\mathbb{F}}_{q^n} \mid \chi = f \circ \psi\}$
has $q^k$ elements if 
$\mathrm{Ord}(\chi) \mid \frac{x^n-1}{f}$, and it is the empty set
if $\mathrm{Ord}(\chi) \nmid \frac{x^n-1}{f}$.
\end{lemma}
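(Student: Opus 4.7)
The plan is to recognize the sum as a character-orthogonality computation, and then to pin down the fiber count via the $\mathbb{F}_q[x]$-module structure on $\widehat{\mathbb{F}}_{q^n}$.

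First I would rewrite the integrand using the very definition of the module action: by $f \circ \psi(\beta) = \psi(f \circ \beta)$, we have $\psi(f \circ \beta)^{-1} = \overline{(f\circ \psi)(\beta)}$. Hence the sum becomes
$$
\sum_{\beta \in \mathbb{F}_{q^n}} \chi(\beta)\,\overline{(f\circ \psi)(\beta)}\;=\;\sum_{\beta \in \mathbb{F}_{q^n}} \bigl(\chi \cdot \overline{f\circ\psi}\bigr)(\beta).
$$
This is the sum of an additive character of $\mathbb{F}_{q^n}$ over the whole group, so by orthogonality it equals $q^n$ when $\chi \cdot \overline{f \circ \psi}$ is trivial (i.e.\ $\chi = f\circ\psi$), and $0$ otherwise. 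This gives the first assertion with essentially no work.

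For the second assertion, I would view the map $T_f : \widehat{\mathbb{F}}_{q^n} \to \widehat{\mathbb{F}}_{q^n}$, $\psi \mapsto f \circ \psi$, as an $\mathbb{F}_q[x]$-module homomorphism. Using the $\mathbb{F}_q[x]$-module isomorphisms $\widehat{\mathbb{F}}_{q^n} \cong \mathbb{F}_{q^n} \cong \mathbb{F}_q[x]/(x^n-1)$ (recalled in the paper), $T_f$ becomes multiplication by $f$ on $\mathbb{F}_q[x]/(x^n-1)$. Writing $h = (x^n-1)/f$ (so $\deg h = n-k$), the kernel of multiplication by $f$ consists of the classes of $g \in \mathbb{F}_q[x]$ with $h \mid g$, and these are represented by the $q^k$ polynomials of the form $h\cdot r$ with $\deg r < k$. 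Therefore $|\ker T_f| = q^k$ and $|\mathrm{im}\, T_f| = q^{n-k}$.

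Finally I would identify the image with $\{\chi : \mathrm{Ord}(\chi) \mid h\}$. One inclusion is immediate: if $\chi = f \circ \psi$, then $h \circ \chi = (hf) \circ \psi = (x^n-1) \circ \psi = 0$, so $\mathrm{Ord}(\chi) \mid h$. For the reverse inclusion I would use a counting argument: the set of characters with $\mathbb{F}_q$-order dividing $h$ is in bijection (via the module isomorphism) with $\{\alpha \in \mathbb{F}_{q^n} : h \circ \alpha = 0\}$, which has exactly $q^{\deg h} = q^{n-k}$ elements, matching $|\mathrm{im}\, T_f|$. Since the image is contained in a set of the same cardinality, equality holds. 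Then fibers of $T_f$ over elements of the image are cosets of $\ker T_f$, hence of size $q^k$; fibers over elements outside the image are empty. The only real step requiring care is the identification of $\mathrm{im}\, T_f$ with the characters of order dividing $h$, but this collapses once the cardinalities are matched.
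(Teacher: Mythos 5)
Your proof is correct. Note that the paper itself gives no proof of this lemma --- it is quoted verbatim from \cite[Lemma 2.5]{AN2} --- so there is nothing internal to compare against; your argument (orthogonality of additive characters for the sum, plus the kernel/image count for multiplication by $f$ on $\mathbb{F}_q[x]/(x^n-1)$ transported through the module isomorphism $\widehat{\mathbb{F}}_{q^n}\cong\mathbb{F}_{q^n}\cong\mathbb{F}_q[x]/(x^n-1)$, with the image pinned down by the cardinality match against $\{\chi:\mathrm{Ord}(\chi)\mid (x^n-1)/f\}$) is exactly the standard route one would expect the cited source to take, and every step checks out.
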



%
%

The next result is a combination of \cite[Theorem 5.5]{Fu} and a special case of \cite[Theorem 5.6]{Fu}.

\begin{lemma}\label{cotaparaf}
Let $v(x),u(x) \in \mathbb{F}_{q^n}(x)$ be rational functions. Write $v(x)=\prod_{j=1}^k s_j(x)^{n_j}$, 
where $s_j(x) \in \mathbb{F}_{q^n}[x]$ are irreducible polynomials, pairwise non-associated, and $n_j$ 
are non-zero integers. Let $D_1=\sum_{j=1}^k \deg(s_j)$, $D_2=\max\{\deg(u),0\}$, $D_3$ be the degree of the 
denominator of $u(x)$ and $D_4$ be the sum of degrees of those irreducible polynomials dividing the denominator 
of $u$, but distinct from $s_j(x)$ ($j=1,\ldots,k$). Let $\eta$ and $\psi$ be, respectively, a multiplicative 
character and a non-trivial additive character of $\mathbb{F}_{q^n}$.
\begin{enumerate}
\item[a)] Assume that $v(x)$ is not of the form $r(x)^{ord(\eta)}$ in $\mathbb{F}(x)$, where $\mathbb{F}$ is the 
algebraic closure of $\mathbb{F}_{q^n}$. Then
$$
\Big| \sum_{\substack{\alpha \in \mathbb{F}_{q^n} \\ v(\alpha) \neq 0, v(\alpha) \neq \infty}} 
\eta(v(\alpha)) \Big|  \leq (D_1-1)q^{\frac{n}{2}}.
$$
\item[b)] Assume that $u(x)$ is not of the form $r(x)^{q^n}-r(x)$ in $\mathbb{F}(x)$, where $\mathbb{F}$ is the 
algebraic closure of $\mathbb{F}_{q^n}$. Then
$$
\Big| \sum_{\substack{\alpha \in \mathbb{F}_{q^n} \\ v(\alpha) \neq 0, v(\alpha) \neq \infty \\ u(\alpha) \neq \infty}} 
\eta(v(\alpha)) \psi(u(\alpha)) \Big|  \leq (D_1+D_2+D_3+D_4-1)q^{\frac{n}{2}}.
$$
\end{enumerate}
\end{lemma}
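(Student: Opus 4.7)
Both estimates are concrete instances of the Weil--Deligne bound for character sums associated to rational functions on curves over $\mathbb{F}_{q^n}$, and my plan is to deduce them by reducing each sum to a rational point count on an absolutely irreducible cover of $\mathbb{P}^1$ and then applying Hasse--Weil together with a Riemann--Hurwitz computation to bound the genus.

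For part (a), I would consider the Kummer cover $C_a : y^d = v(x)$, where $d = \mathrm{Ord}(\eta)$. The hypothesis that $v(x)$ is not of the form $r(x)^d$ in $\mathbb{F}(x)$ is precisely what guarantees that $C_a$ is absolutely irreducible over $\mathbb{F}_{q^n}$. One can then express $\sum_\alpha \eta(v(\alpha))$ as the deviation of $\#C_a(\mathbb{F}_{q^n})$ from its expected value of $q^n+1$ unramified points on $\mathbb{P}^1$, so that Deligne's inequality $|\#C_a(\mathbb{F}_{q^n}) - (q^n+1)| \leq 2 g_a \, q^{n/2}$ gives the estimate provided that $g_a$ can be controlled. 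The ramification divisor of $C_a \to \mathbb{P}^1$ is supported at the zeros and poles of $v$ (and at infinity), so by Riemann--Hurwitz its total contribution to $g_a$ is linear in $D_1$; together with the compensation coming from the trivial character summands at the finite set of excluded points, one obtains the stated constant $D_1-1$.

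For part (b), I would work with the compound Kummer--Artin--Schreier cover $C_b : y^d = v(x),\ z^{q^n}-z = u(x)$. The hypothesis on $v$ handles the Kummer layer as above, while the hypothesis that $u(x)$ is not of the form $r(x)^{q^n}-r(x)$ ensures that the additive character $\psi \circ u$ is nontrivial on the function field, securing absolute irreducibility of the combined cover. The Artin--Schreier layer contributes additional ramification concentrated at the poles of $u$, whose total degree is controlled by $D_2, D_3, D_4$ (degree of the polynomial part of $u$, degree of the denominator of $u$, and the new irreducible factors in that denominator). Riemann--Hurwitz now yields $g_b = O(D_1+D_2+D_3+D_4)$ with the correct constant, and Deligne's bound translates into the asserted estimate $(D_1+D_2+D_3+D_4-1) q^{n/2}$. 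The main obstacle, if one wanted a self-contained argument rather than a direct citation of Fu, would be the careful absolute irreducibility check in the mixed Kummer--Artin--Schreier setting of part (b) and the precise accounting of the ramification divisor; the excluded points (zeros and poles of $v$, and poles of $u$) contribute only $O(1)$ and are absorbed into the additive constants.
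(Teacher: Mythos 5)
The paper does not prove this lemma at all: it is imported verbatim as a combination of Theorems 5.5 and 5.6 of Fu--Wan \cite{Fu}, so there is no in-paper argument to compare yours against. Judged on its own terms, your sketch follows the standard geometric route that underlies such bounds, but as written it has two genuine gaps. First, in part (a) the sum $\sum_{\alpha}\eta(v(\alpha))$ is \emph{not} the deviation of $\#C_a(\mathbb{F}_{q^n})$ from $q^n+1$: counting points on the Kummer cover $y^d=v(x)$ produces $\sum_{j=0}^{d-1}\sum_{\alpha}\eta^{j}(v(\alpha))$, i.e.\ the aggregate over all powers of $\eta$, and Hasse--Weil applied to $C_a$ only bounds that aggregate by $2g_aq^{n/2}$. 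To isolate the single character sum one must instead work with the $L$-function $L(\eta\circ v,T)$ (equivalently, the Kummer sheaf $\mathcal{L}_{\eta(v)}$ on the open subset of $\mathbb{P}^1$ where $v$ is defined and nonzero), show it is a polynomial whose degree is exactly $D_1-1$, and invoke the Riemann hypothesis for its roots. That degree computation --- a conductor/Euler characteristic count via Grothendieck--Ogg--Shafarevich, or classically the genus of the corresponding abelian extension of $\mathbb{F}_{q^n}(x)$ --- is precisely where the constant $D_1-1$ comes from, and your sketch replaces it with ``linear in $D_1$,'' which does not establish the stated inequality.

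Second, the same issue is compounded in part (b): the constant $D_1+D_2+D_3+D_4-1$ encodes a precise accounting of wild ramification of the Artin--Schreier layer at each pole of $u$ (contributing $D_2$ at infinity and $D_3+D_4$ at the finite poles, with the splitting between $D_3$ and $D_4$ reflecting whether a pole of $u$ is already a zero or pole of $v$), and ``$g_b=O(D_1+D_2+D_3+D_4)$ with the correct constant'' asserts rather than derives this. You also need the mixed sum to be expressed through a single sheaf $\mathcal{L}_{\eta(v)}\otimes\mathcal{L}_{\psi(u)}$ rather than through the full compound cover, for the same reason as in (a). None of this is unfixable --- it is essentially what Fu and Wan carry out --- but since the entire content of the lemma is the exact value of the constants, a proof that leaves the degree of the $L$-function uncomputed has not proved the statement. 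Given that the paper simply cites \cite{Fu}, the efficient course is to do likewise rather than reconstruct the cohomological argument.
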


\section{General results}\label{sectiongen}

Let $r_1$, $r_2$ be positive divisors of $q^n-1$ and
let $f_1,f_2 \in \mathbb{F}_q[x]$ be monic factors of $x^n-1$ of degrees
$k_1$, $k_2$, respectively. Let also $m_1$, $m_2$ be non-negative integers
such that  $1 \le m_1+m_2 < q^{n/2}$ and let $F =\frac{F_1}{F_2} \in \Upsilon_{q^n}(m_1,m_2)$.
Also, let $R_1$, $R_2$ be divisors of $\frac{q^n-1}{r_1}$
and $\frac{q^n-1}{r_2}$, respectively, and $g_1,g_2 \in \mathbb{F}_q[x]$ 
be monic divisors of $x^n-1$.

We want to determine conditions on $q$ and $n$ for which
there exists an element $\alpha \in \mathbb{F}_{q^n}$ $r_1$-primitive
$k_1$-normal over $\mathbb{F}_q$ such that $F(\alpha)\in \mathbb{F}_{q^n}$ is $r_2$-primitive
$k_2$-normal over $\mathbb{F}_q$. For this, the following definition plays an important role.


\begin{definition}\label{def-NrfmT}
We denote by $N_F(R_1,R_2,g_1,g_2)$ (when $g_1=g_2=:g$ we will write $N_F(R_1,R_2,g)$) the sum
$$
\sum_{{
			\substack{\alpha \in \mathbb{F}_{q^n}^*\backslash  S_F \\
				\beta_1,\beta_2 \in \mathbb{F}_{q^n} }
	}} 
	\Big( 
	\mathbb{I}_{R_1,r_1}(\alpha) \mathbb{I}_{R_2,r_2}(F(\alpha)) \Omega_{g_1}(\beta_1) \Omega_{g_2}(\beta_2)
	I_0\left(\alpha - f_1 \circ \beta_1  \right) 
	I_0\left(F(\alpha) - f_2 \circ \beta_2 \right)  \Big),
$$
where $S_F:=\{\alpha \in \mathbb{F}_{q^n} \ \mid \  F_1(\alpha)=0 \textrm{ or } F_2(\alpha)=0\}$.
\end{definition}

From the definitions of $\mathbb{I}_{R_i,r_i}$, $\Omega_{g_i}$ ($i\in\{ 1,2\}$), $I_0$ and Definition \ref{def-NrfmT}, 
$N_F(R_1,R_2,g_1,g_2)$ counts 
the numbers of triples $(\alpha, \beta_1, \beta_2) \in (\mathbb{F}_{q^n}^*\backslash  S_F) \times (\mathbb{F}_{q^n})^2$ 
such that $\alpha$ is $(R_1,r_1)$-free, $F(\alpha)$ is $(R_2,r_2)$-free,
$\beta_1$ is $g_1$-free, $\beta_2$ is $g_2$-free,
$\alpha=f_1 \circ \beta_1$ and
$F(\alpha)=f_2 \circ \beta_2$. 
In particular, if $N_F(\frac{q^n-1}{r_1},\frac{q^n-1}{r_2},x^n-1)>0$, then there exists a triple 
$(\alpha, \beta_1, \beta_2) \in (\mathbb{F}_{q^n}^*\backslash  S_F) \times (\mathbb{F}_{q^n})^2$ 
such that $\alpha=f_1 \circ \beta_1$ is an $r_1$-primitive $k_1$-normal element of 
$\mathbb{F}_{q^n}$ over $\mathbb{F}_q$ and 
$F(\alpha)= f_2 \circ \beta_2$ is an $r_2$-primitive $k_2$-normal element of $\mathbb{F}_{q^n}$ over $\mathbb{F}_q$.

We need to find lower estimates for the sum above, in order to guarantee the positivity of $N_F(\frac{q^n-1}{r_1},\frac{q^n-1}{r_2},x^n-1)$. We have the following result.

\begin{theorem}\label{principal}
Let $M=\max\{2(m_1+m_2),m_1+3m_2+1\}$.
If 
$$
q^{\frac{n}{2}-k_1-k_2}\geq M r_1r_2 W(R_1)W(R_2)W(\gcd(g_1,\frac{x^n-1}{f_1}))W(\gcd(g_2,\frac{x^n-1}{f_2})),
$$ 
then $N_F(R_1,R_2,g_1,g_2)>0$. 

In particular, if 
$q^{\frac{n}{2}-k_1-k_2}\geq M r_1r_2 W(\frac{q^n-1}{r_1})W(\frac{q^n-1}{r_2})
W(\frac{x^n-1}{f_1})
W(\frac{x^n-1}{f_2})$,
then there exists an $r_1$-primitive $k_1$-normal element $\alpha \in \mathbb{F}_{q^n}$ over $\mathbb{F}_q$ such that 
$F(\alpha) \in \mathbb{F}_{q^n}$ is an $r_2$-primitive $k_2$-normal element over $\mathbb{F}_q$.
\end{theorem}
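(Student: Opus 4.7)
The plan is to expand $N_F(R_1, R_2, g_1, g_2)$ as a multi-character sum. Substituting the character formulas for $\mathbb{I}_{R_i,r_i}$ from \cite[Proposition 3.6]{Cohen2022}, for $\Omega_{g_i}$ from \cite[Theorem 13.4.4]{galois}, and Definition \ref{char0} for $I_0$ decouples the sum over $(\alpha, \beta_1, \beta_2)$ into an $\alpha$-sum and two independent $\beta_i$-sums, indexed by divisors $d_i \mid R_i r_i$, divisors $h_i \mid g_i$, multiplicative characters $\eta_{d_i}$ of order $d_i$, additive characters $\chi_{h_i}$ of $\mathbb{F}_q$-order $h_i$, and additive characters $\psi_1, \psi_2 \in \widehat{\mathbb{F}}_{q^n}$. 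Each $\beta_i$-sum is precisely the object handled by Lemma \ref{case-b}: it vanishes unless $\chi_{h_i} = f_i \circ \psi_i$ and equals $q^n$ otherwise. In the non-vanishing case $h_i$ is forced to be a monic divisor of $\gcd(g_i, (x^n-1)/f_i)$, and for each admissible $\chi_{h_i}$ the fiber of compatible $\psi_i$ has exactly $q^{k_i}$ elements. The two factors of $q^n$ then cancel the $q^{-2n}$ introduced by the $I_0$'s.

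The main term arises from the all-trivial tuple $d_1 = d_2 = 1$, $h_1 = h_2 = 1$, $\psi_1 = \psi_2 =$ trivial; since $|S_F| \leq m_1 + m_2$, its contribution is bounded below by
\begin{equation*}
M_{\mathrm{main}} \geq \frac{\theta(R_1)\theta(R_2)\Theta(g_1)\Theta(g_2)}{r_1 r_2}\bigl(q^n - m_1 - m_2 - 1\bigr).
\end{equation*}
Every other tuple yields an inner character sum $S_\alpha = \sum_{\alpha \in \mathbb{F}_{q^n}^* \setminus S_F} \eta_{d_1}(\alpha) \eta_{d_2}(F(\alpha)) \psi_1(\alpha) \psi_2(F(\alpha))$ to which Lemma \ref{cotaparaf} applies: part (a) when both $\psi_i$ are trivial but some $\eta_{d_i}$ is not, part (b) whenever some $\psi_i$ is non-trivial. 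Condition (3) of Definition \ref{def-upsilon} is exactly what is needed to guarantee that the rational function playing the role of $v(x) = \alpha^{a_1} F(\alpha)^{a_2}$ is not an $\ord(\eta)$-th power, and that $u(x) = c_1 \alpha + c_2 F(\alpha)$ is not of the form $r(x)^{q^n} - r(x)$; a routine degree count using $\deg F_i \leq m_i$ then yields $D_1 + D_2 + D_3 + D_4 \leq M-1$ in both cases, where $M = \max\{2(m_1+m_2),\, m_1 + 3m_2 + 1\}$.

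To aggregate the error I would bundle each block of characters independently. Lemma \ref{lemmacohen2022} collapses the M\"obius sum over $d_i \mid R_i r_i$ into the factor $r_i W(R_i)$; the analogous sum over $h_i$ dividing $\gcd(g_i, (x^n-1)/f_i)$ produces $W(\gcd(g_i, (x^n-1)/f_i))$; and the $q^{k_i}$ admissible choices of $\psi_i$ per $\chi_{h_i}$ contribute a combined factor $q^{k_1+k_2}$. Multiplying by the prefactor $\theta(R_1)\theta(R_2)\Theta(g_1)\Theta(g_2)/(r_1 r_2)$ gives, with $W_i := W(\gcd(g_i,(x^n-1)/f_i))$,
\begin{equation*}
|N_F - M_{\mathrm{main}}| \leq \theta(R_1)\theta(R_2)\Theta(g_1)\Theta(g_2)\,(M-1)\,q^{n/2+k_1+k_2}\, W(R_1) W(R_2) W_1 W_2.
\end{equation*}
Cancelling the common prefactor $\theta\Theta/(r_1 r_2)$, the inequality $M_{\mathrm{main}} > |N_F - M_{\mathrm{main}}|$ reduces to $q^n - m_1 - m_2 - 1 > (M-1) r_1 r_2 q^{n/2+k_1+k_2} W(R_1) W(R_2) W_1 W_2$, which, by $m_1 + m_2 + 1 \leq q^{n/2}$, is implied by the stated hypothesis $q^{n/2-k_1-k_2} \geq M r_1 r_2 W(R_1) W(R_2) W_1 W_2$. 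The "in particular" clause is the specialization $R_i = (q^n-1)/r_i$ and $g_i = x^n-1$.

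The main obstacle is the bookkeeping required to verify, uniformly over every non-trivial character tuple, that condition (3) of Definition \ref{def-upsilon} rules out both exceptional forms of Lemma \ref{cotaparaf} and that the mixed degree count on $\alpha^{a_1} F(\alpha)^{a_2}$ and $c_1 \alpha + c_2 F(\alpha)$ stays within $M - 1$, thereby justifying the asymmetric bound $M = \max\{2(m_1+m_2),\, m_1 + 3m_2 + 1\}$ (one branch coming from case (a), the other from case (b)). The Davenport-type obstruction---an irreducible prime $g \neq x$ dividing $F_1 F_2$ with multiplicity coprime to $q-1$---is classical, but aligning it with the various orders of the multiplicative characters $\eta_{d_1}, \eta_{d_2}$ simultaneously in play demands some care.
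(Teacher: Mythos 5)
Your proposal follows essentially the same route as the paper: expand $N_F$ via the character formulas, collapse the $\beta_i$-sums with Lemma \ref{case-b} (forcing $h_i \mid \gcd(g_i,\frac{x^n-1}{f_1})$ and contributing $q^{k_i}$ per admissible $\chi_{h_i}$), bound the remaining $\alpha$-sum by Lemma \ref{cotaparaf} in the non-trivial cases, and aggregate with Lemma \ref{lemmacohen2022}. The only quibble is your degree count ($D_1+D_2+D_3+D_4\le M-1$ should read $D_1+D_2+D_3+D_4-1\le M$, so the uniform bound is $Mq^{n/2}$ rather than $(M-1)q^{n/2}$), which does not affect the argument since your stated hypothesis already carries the factor $M$.
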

\begin{proof}
Let
$S_F$ as in Definition \ref{def-NrfmT}.
From definitions of $\mathbb{I}_{R,r}$, $\Omega_{g}$, $I_0$ and Definition \ref{def-NrfmT}, we have that $N_F(R_1,R_2,g_1,g_2)$ is equal to
$$
\frac{\theta(R_1)\theta(R_2)\Theta(g_1)\Theta(g_2)}{r_1r_2 }
\mathop{\int}_{
\substack{
	{d_1}_{(r_1)}|R_1r_1\\
	{d_2}_{(r_2)}|R_2r_2 }  }
\mathop{\int}_{
\substack{h_1|g_1 \\ h_2|g_2}	}
\sum_{\psi_1,\psi_2 \in \widehat{\mathbb{F}}_{q^n}}
\tilde{S}(\eta_{d_1},\eta_{d_2},\chi_{h_1},\chi_{h_2},\psi_1,\psi_2),
$$
where
\begin{align*}
	\tilde{S}(\eta_{d_1},\eta_{d_2},\chi_{h_1},\chi_{h_2},\psi_1,\psi_2) & =  
\frac{1}{q^{2n}}
\sum_{\alpha \in \mathbb{F}_{q^n}^* \setminus S_F}
	        \eta_{d_1}(\alpha)\eta_{d_2}(F(\alpha)) \psi_1(\alpha) \psi_2(F(\alpha))  
	\times\\
	& \times \sum_{\beta_1 \in \mathbb{F}_{q^n}} \chi_{h_1}(\beta_1) \psi_1^{-1}(f_1 \circ \beta_1) 
	\sum_{\beta_2 \in \mathbb{F}_{q^n}} \chi_{h_2}(\beta_2) \psi_2^{-1}(f_2 \circ \beta_2).
\end{align*}
First note that from Lemma \ref{case-b}, 
if $\psi_ i \in \hat{f}_i^{-1}(\chi_{h_i})$, for $i\in\{ 1,2\}$, then we have
$$
\sum_{\beta_i \in \mathbb{F}_{q^n}}  \chi_{h_i}(\beta_i) \psi_i^{-1}(f_i \circ \beta_1)
= q^{n}.
$$
This sum is $0$ for $\psi_ i \notin \hat{f}_i^{-1}(\chi_{h_i})$ 
and  the set $\hat{f}_i^{-1}(\chi_{h_i})$ is empty if  $\mathrm{Ord}(\chi_{h_i})=h_i \nmid \frac{x^n-1}{f_i}$.
Defining  $\widetilde{g_i}=\gcd (g_i, \frac{x^n-1}{f_i})$,
we get that $N_F(R_1,R_2,g_1,g_2)$ equals
$$
\frac{\theta(R_1)\theta(R_2)\Theta(g_1)\Theta(g_2)}{r_1r_2 }
\mathop{\int}_{
	\substack{
		{d_1}_{(r_1)}|R_1r_1\\
		{d_2}_{(r_2)}|R_2r_2 }  }
\mathop{\int}_{
	\substack{h_1|\widetilde{g_1} \\ h_2| \widetilde{g_2}}	}
\sum_{
\substack{
\psi_ 1 \in \hat{f}_1^{-1}(\chi_{h_1})  \\
\psi_ 2 \in \hat{f}_2^{-1}(\chi_{h_2})
}
}
S(\eta_{d_1},\eta_{d_2},\psi_1,\psi_2),
$$
where
$$
S(\eta_{d_1},\eta_{d_2},\psi_1,\psi_2) =  
	\sum_{\alpha \in \mathbb{F}_{q^n}^* \setminus S_F}
\eta_{d_1}(\alpha)\eta_{d_2}(F(\alpha)) \psi_1(\alpha) \psi_2(F(\alpha)) .
$$
To find a lower bound for $N_F(R_1,R_2,g_1,g_2)$, we will bound 
$|S(\eta_{d_1},\eta_{d_2},\psi_1,\psi_2)|$.

Now we consider six cases.
\begin{enumerate}
\item[(i)] We first consider the case where $\eta_{d_1}$ and $\eta_{d_2}$ are trivial multiplicative characters, 
$\psi_1$ and $\psi_2$ are trivial additive characters, so that
$$
S(\eta_{d_1},\eta_{d_2},\psi_1,\psi_2) = |\mathbb{F}_{q^n}^* \backslash S_F|
\geq q^n - (m_1+m_2+1).
$$
\item[(ii)] Consider now the case where $\eta_{d_2}$ is a trivial multiplicative character,
$\psi_2$ is a trivial additive character, $\eta_{d_1}$ is any multiplicative character
of order $d_1$, and
$\psi_1$ is not a trivial additive character. From Lemma \ref{cotaparaf}(b) we get
\begin{eqnarray*}
|S(\eta_{d_1},\eta_{d_2},\psi_1,\psi_2)| & =&  
\Big|\sum_{\alpha \in \mathbb{F}_{q^n}^* \setminus S_F}
\eta_{d_1} (\alpha) \psi_1(\alpha)\Big| \\
& \leq&  
\Big|\sum_{\alpha \in \mathbb{F}_{q^n}^*}
\eta_{d_1} (\alpha) \psi_1(\alpha)\Big| + \Big|\sum_{\alpha \in  S_F}\eta_{d_1} (\alpha)  \psi_1(\alpha)\Big| \\
&\leq& q^{n/2} + m_1+m_2.
\end{eqnarray*}
\item[(iii)] If only $\eta_{d_1}$ is not a trivial multiplicative character, then
\begin{eqnarray*}
|S(\eta_{d_1},\eta_{d_2},\psi_1,\psi_2)| & =&  
\Big|\sum_{\alpha \in \mathbb{F}_{q^n}^* \setminus S_F} \eta_{d_1}(\alpha)\Big| 
	=
	\Big|\sum_{\alpha \in  S_F } \eta_{d_1}(\alpha)\Big| \\
	&\leq& m_1+m_2 ,
\end{eqnarray*}
since $\sum_{\alpha \in  \mathbb{F}_{q^n}^*} \eta_{d_1}(\alpha)=0$.
\end{enumerate}
Before to treat the cases where $\eta_{d_2}$ is not a  trivial multiplicative character or 
$\psi_2$ is not a trivial additive character, we will rewrite the expression
$S(\eta_{d_1},\eta_{d_2},\psi_1,\psi_2)$.

It is well-known (see e.g. \cite[Theorem 5.8.]{LN}) that there exists a multiplicative character
$\eta$ of order $q^n-1$ and integers $t_1,t_2 \in \{0,1,\ldots, q^n-2 \}$ such that
$\eta_{d_1} (\alpha)=\eta (\alpha^{t_1})$ and $\eta_{d_2} (\alpha)=\eta (\alpha^{t_2})$
for all $\alpha \in \mathbb{F}_{q^n}^*$,
and $t_i=0$ if and only if $\eta_{d_i}$ is the trivial multiplicative character, for $i\in\{ 1,2\}$.
Hence,
$\eta_{d_1}(\alpha)\eta_{d_2}(F(\alpha)) = \eta (\alpha^{t_1} F(\alpha)^{t_2})$.

Analogously, it is also known (see e.g. \cite[Theorem 5.7.]{LN}) that
for given additive characters $\psi_1$, $\psi_2$,
there exist elements $y_1,y_2 \in \mathbb{F}_{q^n}$ such that 
$\psi_1(\alpha)=\chi(y_1 \alpha)$ and $\psi_2(\alpha)=\chi(y_2\alpha)$ for all $\alpha \in \mathbb{F}_{q^n}$,
where $\chi$ is the canonical additive character of $\mathbb{F}_{q^n}$,
and $y_i=0$ if and only if $\psi_{i}$ is the trivial additive character for $i\in\{ 1,2\}$.
Hence
$\psi_1(\alpha)\psi_2(F(\alpha)) = \chi(y_1 \alpha+ y_2F(\alpha))$.
\begin{enumerate}
\item[(iv)] Suppose now that $\eta_{d_1}$ 
is any multiplicative character of order $d_1$, $\eta_{d_2}$ is not a trivial multiplicative character, 
$\psi_{1}$ and $\psi_2$ are trivial additive characters. In this case,
$$
S(\eta_{d_1},\eta_{d_2},\psi_1,\psi_2) =
	\sum_{\alpha \in \mathbb{F}_{q^n}^* \setminus S_F} \eta (\alpha^{t_1} F(\alpha)^{t_2}),
$$
with $t_2 \neq 0$.

From the proof of \cite[Theorem 3.2]{cgnt2}, we know that $v(x):=x^{t_1} F(x)^{t_2}$ is not of
the form $r(x)^{q^n-1}$ in $\mathbb{F}(x)$ where $\mathbb{F}$ is the algebraic closure of $\mathbb{F}_{q^n}$, 
thus we can use Lemma \ref{cotaparaf} (a).


Let $S_v$ be the set of elements $\alpha \in \mathbb{F}_{q^n}$ such that $v(\alpha)=0$ or $v(\alpha)$ is not defined.
If $0 \in S_v$, then $\mathbb{F}_{q^n}^* \backslash S_F = \mathbb{F}_{q^n} \backslash S_v$ and,
from Lemma \ref{cotaparaf}(a),
we have
$$
S(\eta_{d_1},\eta_{d_2},\psi_1,\psi_2) =
 \sum_{\alpha \in \mathbb{F}_{q^n}^* \setminus S_F} \eta (v(\alpha)) =
\sum_{\alpha \in \mathbb{F}_{q^n} \setminus S_v} \eta (v(\alpha)) ,
$$
and hence $|S(\eta_{d_1},\eta_{d_2},\psi_1,\psi_2)|  \leq (m_1+m_2)q^{n/2}$.

If $0 \notin S_v$, then
$$
S(\eta_{d_1},\eta_{d_2},\psi_1,\psi_2) =
 \sum_{\alpha \in \mathbb{F}_{q^n}^* \setminus S_F} \eta (v(\alpha)) =
\sum_{\alpha \in \mathbb{F}_{q^n} \setminus S_v} \eta (v(\alpha)) - \eta(v(0)),
$$
so that $|S(\eta_{d_1},\eta_{d_2},\psi_1,\psi_2)| \leq (m_1+m_2-1)q^{n/2}+ 1 \leq (m_1+m_2)q^{n/2}$.

\item[(v)] Now we assume that  $\eta_{d_1}$ 
is any multiplicative character of order $d_1$, $\eta_{d_2}$ is not a trivial multiplicative character,
$\psi_1$ is not a trivial additive character and $\psi_2$ is a trivial additive character.
Define $S_v$ as in the previous case.
If $0 \in S_v$, then $\mathbb{F}_{q^n}^* \backslash S_F = \mathbb{F}_{q^n} \backslash S_v$ and,
from Lemma \ref{cotaparaf}(b),
$$
S(\eta_{d_1},\eta_{d_2},\psi_1,\psi_2) =
\sum_{\alpha \in \mathbb{F}_{q^n}^* \setminus S_F} \eta (v(\alpha))\psi_1(\alpha)=
\sum_{\alpha \in \mathbb{F}_{q^n} \setminus S_v} \eta (v(\alpha))\psi_1(\alpha) ,
$$
and thus $|S(\eta_{d_1},\eta_{d_2},\psi_1,\psi_2)|  \leq (m_1+m_2+1)q^{n/2}$.

If $0 \notin S_v$, then
\begin{eqnarray*}
S(\eta_{d_1},\eta_{d_2},\psi_1,\psi_2) &=& 
\sum_{\alpha \in \mathbb{F}_{q^n}^* \setminus S_F} \eta (v(\alpha))\psi_1(\alpha)\\
&=&
 \sum_{\alpha \in \mathbb{F}_{q^n} \setminus S_v} \eta (v(\alpha))\psi_1(\alpha) - \eta(v(0)),
\end{eqnarray*}
so that $|S(\eta_{d_1},\eta_{d_2},\psi_1,\psi_2)| \leq (m_1+m_2)q^{n/2}+ 1 < (m_1+m_2+1)q^{n/2}$.

\item[(vi)] Finally, we consider the case where  $\eta_{d_1}$ 
is any multiplicative character of order $d_1$, $\eta_{d_2}$ is not a trivial multiplicative character and
$\psi_1$, $\psi_2$ are not trivial additive characters. 
In this case,
$$
S(\eta_{d_1},\eta_{d_2},\psi_1,\psi_2) =
\sum_{\alpha \in \mathbb{F}_{q^n}^* \setminus S_F} \eta (\alpha^{t_1} F(\alpha)^{t_2})\chi(y_1 \alpha + y_2 F(\alpha)),
$$
where $y_1,y_2 \neq 0$. Since $m_1+m_2 < q^{n/2}$ and $y_2 \neq 0$, the function $y_1 x + y_2 F(x)$
cannot be of the form $r(x)^{q^n-1}-r(x)$ for any $r(x) \in \mathbb{F}(x)$. Thus we may apply
Lemma \ref{cotaparaf}(b). As in the cases (iv) and (v), we may obtain different
inequalities, but in both cases we get
$$
|S(\eta_{d_1},\eta_{d_2},\psi_1,\psi_2)| \leq M q^{n/2},
$$
where $M= m_1+m_2 +1+ \max(\deg(y_1 x + y_2 F(x)),0)+m_2+m_2-1$.

We have that
$\max(\deg(y_1 x + y_2 F(x)),0) \leq \max(m_2+1,m_1) - m_2$,
since $y_1 x + y_2 F(x)= \frac{y_1xF_2(x)+y_2 F_1(x)}{F_2(x)}$. Hence we may assume that
$M=\max \{2(m_1+m_2),m_1+3m_2+1\}$.

\end{enumerate}

Observe that
$A=\big|\frac{r_1r_2N_F(R_1,R_2,g_1,g_2)}{\theta(R_1)\theta(R_2)\Theta(g_1)\Theta(g_2)} - |\mathbb{F}_{q^n}^* \backslash S_F| \big|$ is bounded by
$$
 \underbrace{
\Big| \mathop{\int}_{
	\substack{
		{d_1}_{(r_1)}|R_1r_1\\
		{d_2}_{(r_2)}|R_2r_2 }  }
\mathop{\int}_{
	\substack{h_1|\widetilde{g_1} \\ h_2| \widetilde{g_2}}	}
\sum_{
	\substack{
		\psi_ 1 \in \hat{f}_1^{-1}(\chi_{h_1})  \\
		\psi_ 2 \in \hat{f}_2^{-1}(\chi_{h_2})
	}
}
S(\eta_{d_1},\eta_{d_2},\psi_1,\psi_2)
\Big|,
 }_{\text{all are non-trivial simultaneously}} 
$$
and we also have
\begin{equation}\label{bound}
|S(\eta_{d_1},\eta_{d_2},\psi_1,\psi_2)|\leq Mq^{n/2}
\end{equation}
in all cases treated above, except in the case (i).

Let
$$
S_1=
\Big|
\mathop{\int}_{
	\substack{
		{d_1}_{(r_1)}|R_1r_1\\
		{d_2}_{(r_2)}|R_2r_2 }  }
\sum_{
	\substack{
		\psi_ 1 \in \hat{f}_1^{-1}(\chi_1)  \\
		\psi_ 2 \in \hat{f}_2^{-1}(\chi_1) \\
		(\psi_1,\psi_2) \neq (\chi_1,\chi_1)
	}
}
S(\eta_{d_1},\eta_{d_2},\psi_1,\psi_2)
\Big|,
$$
where $\chi_1$ is the trivial additive character,
and
$$
S_2=
\Big|
\mathop{\int}_{
	\substack{
		{d_1}_{(r_1)}|R_1r_1\\
		{d_2}_{(r_2)}|R_2r_2 }  }
\mathop{\int}_{
	\substack{h_1|\widetilde{g_1} \\ h_2| \widetilde{g_2}\\ (h_1,h_2)\neq (1,1)}	}
\sum_{
	\substack{
		\psi_ 1 \in \hat{f}_1^{-1}(\chi_{h_1})  \\
		\psi_ 2 \in \hat{f}_2^{-1}(\chi_{h_2})
	}
}
S(\eta_{d_1},\eta_{d_2},\psi_1,\psi_2)
\Big|,
$$
therefore $A \leq S_1+S_2$.

From inequality \eqref{bound}, Lemma \ref{lemmacohen2022}, Lemma \ref{case-b} and using
that there are $\phi(d_i)$ multiplicative characters of order $d_i$ ($i\in\{ 1,2\}$),
we get
$$
S_1\leq M q^{n/2} (q^{k_1+k_2}-1)r_1r_2W(R_1)W(R_2).
$$

From inequality \eqref{bound}, Lemma \ref{lemmacohen2022}, Lemma \ref{case-b}, using
that 
there are $\Phi_q(h_i)$ additive characters of $\mathbb{F}_q$-order $h_i$ ($i\in\{ 1,2\}$) 
we get
$$
S_2\leq M q^{n/2} q^{k_1+k_2}r_1r_2W(R_1)W(R_2) \left( W(\widetilde{g}_1)W(\widetilde{g}_2)-1\right).
$$

Putting all these inequalities together and using the case (i), we get
\begin{eqnarray*}
\frac{r_1r_2N_F(R_1,R_2,g_1,g_2)}{\theta(R_1)\theta(R_2)\Theta(g_1)\Theta(g_2)} & \geq &
q^n - (m_1+m_2+1) - (S_1+S_2) \\
& = & q^n - M q^{n/2+k_1+k_2}r_1r_2W(R_1)W(R_2) W(\widetilde{g}_1)W(\widetilde{g}_2)  \\
&   &  + M q^{n/2} r_1r_2W(R_1)W(R_2)- (m_1+m_2+1) \\
& > & q^n - M q^{n/2+ k_1+k_2}r_1r_2W(R_1)W(R_2) W(\widetilde{g}_1)W(\widetilde{g}_2).
\end{eqnarray*}

Thus, if
$$
q^{\frac{n}{2}-k_1-k_2}\geq M r_1r_2W(R_1)W(R_2) W(\widetilde{g}_1)W(\widetilde{g}_2),
$$
then $N_F(R_1,R_2,g_1,g_2)>0$.
In particular, this implies the last sentence of the theorem.
\end{proof}

\subsection{The prime sieve} 
The aim of the section is to relax further the condition
of Theorem \ref{principal}. The sieving technique from the next two results is similar to others which have
appeared in previous works about primitive and normal elements.

\begin{lemma}\label{lema-sieve}
Let $\ell_1$ be a divisor of $R_1$, let $\{p_1,\ldots,p_u\}$ be the set of all primes which
divide $R_1$ but do not divide $\ell_1$, let $\ell_2$ be a divisor of $R_2$ and let $\{q_1,\ldots,q_v\}$ be the set of all primes which
divide $R_2$ but do not divide $\ell_2$.
Also, let 
$\{P_1,\ldots,P_s\}$ be the set of all monic irreducible polynomials which divide $x^n-1$ but do not divide $g_1$, and $\{Q_1,\ldots,Q_t\}$ be the set of all monic irreducible polynomials which divide $x^n-1$ but do not divide $g_2$. Then
\begin{align}\label{desig-sieve}
N_F(R_1,R_2,x^n-1) & \geq \sum_{i=1}^u N_F(\ell_1p_i,\ell_2,g_1,g_2) +\sum_{i=1}^v N_F(\ell_1,\ell_2q_i,g_1,g_2) + \\ 								\nonumber
				   & + \sum_{i=1}^s N_F(\ell_1,\ell_2,g_1P_i,g_2) + \sum_{i=1}^t N_F(\ell_1,\ell_2,g_1,g_2Q_i) \\ 									\nonumber
				   & -(u+v+s+t-1)N_F(\ell_1,\ell_2,g_1,g_2).
\end{align}
\end{lemma}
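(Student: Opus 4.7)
The plan is to recognize the inequality as a standard application of Bonferroni's inequality once one has observed the monotonicity of the freeness conditions involved. The starting point is to interpret $N_F(\ell_1,\ell_2,g_1,g_2)$ as the cardinality of the set $T$ of triples $(\alpha,\beta_1,\beta_2)\in(\mathbb{F}_{q^n}^*\setminus S_F)\times\mathbb{F}_{q^n}^2$ satisfying the six conditions spelled out after Definition \ref{def-NrfmT}: $\alpha$ is $(\ell_1,r_1)$-free, $F(\alpha)$ is $(\ell_2,r_2)$-free, $\beta_1$ is $g_1$-free, $\beta_2$ is $g_2$-free, $\alpha=f_1\circ\beta_1$, and $F(\alpha)=f_2\circ\beta_2$.

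Next I would record two key equivalences. Because $(R,r)$-freeness only tests primes dividing $R$, $\alpha$ is $(R_1,r_1)$-free if and only if it is $(\ell_1 p_i,r_1)$-free for every $i=1,\ldots,u$; analogously, $F(\alpha)$ is $(R_2,r_2)$-free if and only if it is $(\ell_2 q_j,r_2)$-free for every $j=1,\ldots,v$. Similarly, polynomial $g$-freeness depends only on the radical of $g$, so $\beta_1$ is $(x^n-1)$-free if and only if it is $g_1P_k$-free for every $k=1,\ldots,s$ (using that the $P_k$ are precisely the irreducible factors of $x^n-1$ missing from $g_1$), and likewise $\beta_2$ is $(x^n-1)$-free if and only if it is $g_2Q_l$-free for every $l=1,\ldots,t$.

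With these equivalences in hand, I would define the ``bad'' subsets of $T$: $B_i^{(1)}$ consists of the triples in $T$ for which $\alpha$ fails to be $(\ell_1p_i,r_1)$-free; $B_j^{(2)}$ those for which $F(\alpha)$ fails to be $(\ell_2q_j,r_2)$-free; $B_k^{(3)}$ those for which $\beta_1$ fails to be $g_1P_k$-free; and $B_l^{(4)}$ those for which $\beta_2$ fails to be $g_2Q_l$-free. By the equivalences above, $N_F(R_1,R_2,x^n-1)$ equals $|T|$ minus the cardinality of the union of all these bad sets, while directly from the definition of $N_F$ one has $|T|-|B_i^{(1)}|=N_F(\ell_1p_i,\ell_2,g_1,g_2)$, with the analogous identity for each of the other three families.

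Finally, I would apply the Bonferroni inequality $\bigl|T\setminus\bigcup B\bigr|\geq |T|-\sum|B|$, substitute $|B|=|T|-(\text{restricted count})$ in each summand, and observe that the $u+v+s+t$ copies of $|T|=N_F(\ell_1,\ell_2,g_1,g_2)$ arising on the right combine with the leading $|T|$ to produce precisely the coefficient $-(u+v+s+t-1)$ of \eqref{desig-sieve}. I do not anticipate any genuine obstacle, since the argument is pure combinatorial bookkeeping; the only step that demands real care is the verification of the equivalences in the second paragraph, and those are routine consequences of the definitions of $(R,r)$-freeness and of polynomial $g$-freeness.
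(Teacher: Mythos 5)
Your proposal is correct and follows essentially the same route as the paper: the paper verifies the inequality by counting the multiplicity with which each triple appears on the right-hand side, which is precisely your Bonferroni/union-bound computation on the bad sets $B_i^{(j)}$ inside $T$, resting on the same two observations (that $(R,r)$-freeness and $g$-freeness depend only on the relevant radicals, and that each refined count is a subcount of $N_F(\ell_1,\ell_2,g_1,g_2)$). No gap to report.
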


\begin{proof}
The left hand side of \eqref{desig-sieve} counts 
the numbers of triples $(\alpha, \beta_1, \beta_2) \in \mathbb{F}_{q^n}^* \times (\mathbb{F}_{q^n})^2$ 
such that $\alpha$ is $(R_1,r_1)$-free, $F(\alpha)$ is $(R_2,r_2)$-free,
$\beta_1,\beta_2$ are normal element,
$\alpha=f_1 \circ \beta_1$ and
$F(\alpha)=f_2 \circ \beta_2$.
Observe that for such a triple $(\alpha, \beta_1, \beta_2)$, we also have that
$\alpha$ is  $(\ell_1,r_1)$-free and $(\ell_1p_i,r_1)$-free for all $i \in \{ 1,\ldots , u\}$, 
$F(\alpha)$ is  $(\ell_2,r_2)$-free and $(\ell_2q_i,r_2)$-free for all $i \in \{ 1,\ldots , v\}$,
$\beta_1$ is $g_1$-free and $g_1P_i$-free for all $i \in \{ 1,\ldots , s\}$, and
$\beta_2$ is $g_2$-free and $g_2Q_i$-free for all $i \in \{ 1,\ldots , t\}$, so that
$(\alpha, \beta_1, \beta_2)$ is  counted $u+v+s+t - (u+v+s+t-1)=1$
time on the right side of \eqref{desig-sieve}.
For any other 
triple $(\alpha, \beta_1, \beta_2) \in \mathbb{F}_{q^n}^* \times (\mathbb{F}_{q^n})^2$,
we have that either $\alpha$ is not $(\ell_1p_i,r_1)$-free for some
$i \in \{ 1,\ldots , u\}$, or $F(\alpha)$ is not $(\ell_2q_i,r_2)$-free 
for some $i \in \{ 1,\ldots , v\}$, or
$\beta_1$ is not $g_1P_i$-free for some $i \in \{ 1,\ldots , s\}$, or
$\beta_2$ is not $g_2Q_i$-free for some $i \in \{ 1,\ldots , t\}$, or
$\alpha \neq f_1 \circ \beta_1$, or $F(\alpha) \neq f_2 \circ \beta_2$,
thus this triple will not be counted in at least one term of one of the four sums of
the right hand side of \eqref{desig-sieve}.
\end{proof}

\begin{proposition}\label{prop1-crivo}  
Assume the notation and conditions of Lemma \ref{lema-sieve} 
with $R_i =\frac{q^n-1}{r_i}$ for $i \in \{ 1,2\}$. Assume also that 
the polynomials of the set $\{P_1,\ldots,P_s\}$ divide $\frac{x^n-1}{f_1}$ 
and the polynomials of the set $\{Q_1,\ldots,Q_t\}$ divide $\frac{x^n-1}{f_2}$.
Let $\delta=1-\sum_{i=1}^u \frac{1}{p_i}-\sum_{i=1}^v \frac{1}{q_i}-\sum_{i=1}^s \frac{1}{q^{\deg(P_i)}}-\sum_{i=1}^t \frac{1}{q^{\deg(Q_i)}}>0$ 
and $\Delta=2+\frac{u+v+s+t-1}{\delta}$. Denote also
$\widetilde{g_i}=\gcd (g_i, \frac{x^n-1}{f_i})$ for $i \in \{ 1,2\}$.
If 
\begin{equation}\label{crivo}
q^{\frac{n}{2}-k_1-k_2}\ge  Mr_1r_2 W(\ell_1)W(\ell_2)W (\widetilde{g_1}) W(\widetilde{g_2}) \Delta,
\end{equation}
then $N_F(R_1,R_2,x^n-1)>0$.
\end{proposition}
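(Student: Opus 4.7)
The plan is to combine the sieve inequality of Lemma \ref{lema-sieve} with the character-sum decomposition developed in the proof of Theorem \ref{principal}, applied at each of the five parameter tuples appearing on the right of \eqref{desig-sieve}.

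First, I would extract from the proof of Theorem \ref{principal} the following identity: for any parameters $R_1', R_2', g_1', g_2'$,
\begin{equation*}
N_F(R_1', R_2', g_1', g_2') = \frac{\theta(R_1')\theta(R_2')\Theta(g_1')\Theta(g_2')}{r_1 r_2}\bigl(|\mathbb{F}_{q^n}^*\setminus S_F| + A(R_1', R_2', g_1', g_2')\bigr),
\end{equation*}
where the character-sum remainder satisfies
\begin{equation*}
|A(R_1', R_2', g_1', g_2')| \leq M q^{n/2+k_1+k_2} r_1 r_2 \, W(R_1')W(R_2')W(\widetilde{g_1'})W(\widetilde{g_2'}).
\end{equation*}

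Second, I would substitute this decomposition into the six $N_F$ terms in \eqref{desig-sieve} and separate the $|\mathbb{F}_{q^n}^*\setminus S_F|$-proportional contributions from the $A(\cdot)$-proportional ones. The multiplicativity identities $\theta(\ell_1 p_i)=\theta(\ell_1)(1-1/p_i)$, $\theta(\ell_2 q_j)=\theta(\ell_2)(1-1/q_j)$, $\Theta(g_1 P_i)=\Theta(g_1)(1-q^{-\deg P_i})$, $\Theta(g_2 Q_j)=\Theta(g_2)(1-q^{-\deg Q_j})$ (valid because $p_i,q_j$ and $P_i,Q_j$ are coprime to $\ell_1,\ell_2$ and $g_1,g_2$ respectively) cause the main contributions to telescope, via the very definition of $\delta$, to
\begin{equation*}
\frac{\theta(\ell_1)\theta(\ell_2)\Theta(g_1)\Theta(g_2)\,\delta}{r_1 r_2}\,|\mathbb{F}_{q^n}^*\setminus S_F|.
\end{equation*}

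Third, I would estimate the error. The hypothesis $P_i \mid (x^n-1)/f_1$ together with $P_i \nmid g_1$ yields $\widetilde{g_1 P_i}=\widetilde{g_1}\,P_i$, hence $W(\widetilde{g_1 P_i})=2W(\widetilde{g_1})$, and analogously for $Q_j$ and for $W(\ell_1 p_i)=2W(\ell_1)$. Combining these with the refined identities for $\theta,\Theta$ in each summand and adding the absolute value of the $(u+v+s+t-1)$-fold subtractive contribution, a careful accounting produces the bound
\begin{equation*}
\frac{\theta(\ell_1)\theta(\ell_2)\Theta(g_1)\Theta(g_2)\,\delta\,\Delta\,M q^{n/2+k_1+k_2}\,W(\ell_1)W(\ell_2)W(\widetilde{g_1})W(\widetilde{g_2})}{r_1 r_2},
\end{equation*}
with $\Delta = 2 + (u+v+s+t-1)/\delta$.

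Finally, since $m_1+m_2<q^{n/2}$ gives $|\mathbb{F}_{q^n}^*\setminus S_F|\geq q^n-(m_1+m_2+1)>q^n-q^{n/2}$, hypothesis \eqref{crivo} forces the main contribution to strictly dominate the error, so $N_F(R_1,R_2,x^n-1)>0$. The main obstacle is the third step: one must retain the refinements $(1-1/p_i)$ and $(1-q^{-\deg P_i})$ throughout the error-term accounting rather than bounding them crudely by $1$, since only this sharpness yields the advertised constant $\Delta=2+(u+v+s+t-1)/\delta$. The main-term telescoping and the substitution of Theorem \ref{principal}'s estimate are routine by comparison.
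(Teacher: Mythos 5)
Your overall strategy (feed the character-sum machinery of Theorem \ref{principal} into the sieve inequality of Lemma \ref{lema-sieve}) is the right one, and your main-term telescoping to $\delta\,\theta(\ell_1)\theta(\ell_2)\Theta(g_1)\Theta(g_2)|\mathbb{F}_{q^n}^*\setminus S_F|/(r_1r_2)$ is correct. But there is a genuine quantitative gap in your third step, and it is fatal to the advertised constant $\Delta$. If you bound the remainder $A(\ell_1 p_i,\ell_2,g_1,g_2)$ of each sieve term \emph{separately}, you must use $W(\ell_1 p_i)=2W(\ell_1)$ (as you note), so each of the $u+v+s+t$ positive terms contributes an error of $2\theta(p_i)\cdot C$ (resp.\ $2\Theta(P_i)\cdot C$), where $C=\theta(\ell_1)\theta(\ell_2)\Theta(g_1)\Theta(g_2)Mq^{n/2+k_1+k_2}W(\ell_1)W(\ell_2)W(\widetilde{g_1})W(\widetilde{g_2})$, and the subtractive term contributes $(u+v+s+t-1)\cdot C$. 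Since $\sum\theta(p_i)+\sum\theta(q_i)+\sum\Theta(P_i)+\sum\Theta(Q_i)=u+v+s+t-1+\delta$, the total error is $C\bigl(2\delta+3(u+v+s+t-1)\bigr)$, not $C\bigl(2\delta+(u+v+s+t-1)\bigr)=C\delta\Delta$. Retaining the refinements $(1-1/p_i)$, as you propose, does not close this factor-of-$3$ discrepancy; it only prevents it from being worse. So your argument proves the conclusion only under the stronger hypothesis with $\Delta'=2+3(u+v+s+t-1)/\delta$ in place of $\Delta$. (There is also a small slip: your displayed error bound drops the factor $r_1r_2$ coming from $|A|\le Mq^{n/2+k_1+k_2}r_1r_2W(\cdot)\cdots$, though this looks like a typo since \eqref{crivo} has it.)

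The missing idea, which is how the paper proceeds, is to exploit an exact cancellation rather than bounding two remainders independently. One writes the identity
$$
N_F(\ell_1p_i,\ell_2,g_1,g_2)=\theta(p_i)N_F(\ell_1,\ell_2,g_1,g_2)+\frac{\theta(\ell_1)\theta(p_i)\theta(\ell_2)\Theta(g_1)\Theta(g_2)}{r_1r_2}\mathop{\textstyle\int}_{p_i\mid {d_1}_{(r_1)}}\cdots\sum S_{\eta,\psi},
$$
i.e.\ the character sums indexed by $d_{1(r_1)}\mid \ell_1 r_1$ occur in both $N_F(\ell_1p_i,\dots)$ and $\theta(p_i)N_F(\ell_1,\dots)$ with identical coefficients and cancel exactly, leaving only the sums with $p_i\mid d_{1(r_1)}$. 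All of these are genuinely non-trivial, each is at most $Mq^{n/2}$ in absolute value by cases (ii)--(vi) of Theorem \ref{principal}, and their weighted count is $r_1r_2q^{k_1+k_2}W(\ell_1)W(\ell_2)W(\widetilde{g_1})W(\widetilde{g_2})$ (a single $W(\ell_1)$, not $2W(\ell_1)$, and with no additional contribution from the base term's own remainder). This gives $|N_F(\ell_1p_i,\dots)-\theta(p_i)N_F(\ell_1,\dots)|\le \theta(p_i)\cdot C$ with coefficient $1$ rather than your effective coefficient, and summing these together with $\delta$ times the lower bound for $N_F(\ell_1,\ell_2,g_1,g_2)$ from Theorem \ref{principal} yields exactly $\delta+(u+v+s+t-1+\delta)=\delta\Delta$ and hence the stated condition \eqref{crivo}. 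You should restructure your error accounting around these difference identities (and the analogous ones for $q_i$, $P_i$, $Q_i$, where the hypotheses $P_i\mid\frac{x^n-1}{f_1}$, $Q_i\mid\frac{x^n-1}{f_2}$ guarantee the new additive characters are not annihilated by $f_1$, $f_2$).
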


\begin{proof}
We can rewrite inequality \eqref{desig-sieve} in the form
\begin{align}\label{eq1-crivo}
N_F(R_1,R_2,x^n-1) & \geq \sum_{i=1}^u \left( N_F(\ell_1p_i,\ell_2,g_1,g_2)- \theta(p_i) N_F(\ell_1,\ell_2,g_1,g_2) \right) + \\ \nonumber
				   & + \sum_{i=1}^v \left( N_F(\ell_1,\ell_2q_i,g_1,g_2)- \theta(q_i) N_F(\ell_1,\ell_2,g_1,g_2) \right) + \\ \nonumber
				   & + \sum_{i=1}^s \left( N_F(\ell_1,\ell_2,g_1P_i,g_2)- \Theta(P_i) N_F(\ell_1,\ell_2,g_1,g_2) \right) + \\ \nonumber
				   & + \sum_{i=1}^t \left( N_F(\ell_1,\ell_2,g_1,g_2Q_i)- \Theta(Q_i) N_F(\ell_1,\ell_2,g_1,g_2) \right) + \\ \nonumber				   
				   & + \delta N_F(\ell_1,\ell_2,g_1,g_2).
\end{align}
From the proof of Theorem \ref{principal}, taking into account that $\theta$ is a multiplicative function
and calling $S_{\eta,\psi}:= S(\eta_{d_1},\eta_{d_2},\psi_1,\psi_2)$,
we get that $N_F(\ell_1p_i,\ell_2,g_1,g_2)$ is equal to
\begin{eqnarray*}
\frac{\theta(\ell_1)\theta(p_i)\theta(\ell_2)\Theta(g_1)\Theta(g_2)}{r_1r_2 }
\mathop{\int}_{
	\substack{
		{d_1}_{(r_1)}|\ell_1p_ir_1\\
		{d_2}_{(r_2)}|\ell_2r_2 }  }
\mathop{\int}_{
	\substack{h_1|\widetilde{g_1} \\ h_2| \widetilde{g_2}}	}
\sum_{
	\substack{
		\psi_ 1 \in \hat{f}_1^{-1}(\chi_{h_1})  \\
		\psi_ 2 \in \hat{f}_2^{-1}(\chi_{h_2})
	}
}
S_{\eta,\psi}
= \theta(p_i) N_F(\ell_1,\ell_2,g_1,g_2) \\
 + \frac{\theta(\ell_1)\theta(p_i)\theta(\ell_2)\Theta(g_1)\Theta(g_2)}{r_1r_2 }
\mathop{\int}_{
	\substack{
		{d_1}_{(r_1)}|\ell_1p_ir_1\\
		p_i | {d_1}_{(r_1)} \\
		{d_2}_{(r_2)}|\ell_2r_2 }  }
\mathop{\int}_{
	\substack{h_1|\widetilde{g_1} \\ h_2| \widetilde{g_2}}	}
\sum_{
	\substack{
		\psi_ 1 \in \hat{f}_1^{-1}(\chi_{h_1})  \\
		\psi_ 2 \in \hat{f}_2^{-1}(\chi_{h_2})
	}
}
S_{\eta,\psi},
\end{eqnarray*}
for all $i\in \{ 1,\ldots,u\}$.
%
%
From cases (ii), (iii), (iv), (v) and (vi) of the proof of Theorem \ref{principal}, 
if $\eta_{d_1}$ is not the trivial multiplicative character, then 
we have that $|S_{\eta,\psi}| \leq M q^{n/2}$ and from 
Lemma \ref{lemmacohen2022}, Lemma \ref{case-b},
we get
$$
\Big| \mathop{\int}_{
	\substack{
		{d_1}_{(r_1)}|\ell_1p_ir_1\\
		p_i | {d_1}_{(r_1)} \\
		{d_2}_{(r_2)}|\ell_2r_2 }  }
\mathop{\int}_{
	\substack{h_1|\widetilde{g_1} \\ h_2| \widetilde{g_2}}	}
\sum_{
	\substack{
		\psi_ 1 \in \hat{f}_1^{-1}(\chi_{h_1})  \\
		\psi_ 2 \in \hat{f}_2^{-1}(\chi_{h_2})
	}
}
S_{\eta,\psi} \Big| 
\leq M q^{\frac{n}{2}+k_1+k_2}r_1r_2 W(\ell_1)W(\ell_2)W(\widetilde{g_1})W(\widetilde{g_2}),
$$
hence 
\begin{align*}
|N_F(\ell_1p_i,\ell_2,g_1,g_2)- \theta(p_i) N_F(\ell_1,\ell_2,g_1,g_2)| & \leq 
\frac{\theta(\ell_1)\theta(p_i)\theta(\ell_2)\Theta(g_1)\Theta(g_2)}{r_1r_2} \times \\
 & M q^{\frac{n}{2}+k_1+k_2}r_1r_2 W(\ell_1)W(\ell_2)W(\widetilde{g_1})W(\widetilde{g_2}).
\end{align*}

Analogously, for $i\in \{1,\ldots,v\}$, we can show that 
\begin{align*}
|N_F(\ell_1,\ell_2q_i,g_1,g_2)- \theta(q_i) N_F(\ell_1,\ell_2,g_1,g_2)| & \leq 
\frac{\theta(\ell_1)\theta(q_i)\theta(\ell_2)\Theta(g_1)\Theta(g_2)}{r_1r_2} \times \\
 & M q^{\frac{n}{2}+k_1+k_2}r_1r_2 W(\ell_1)W(\ell_2)W(\widetilde{g_1})W(\widetilde{g_2}).
\end{align*}
Also, for $i\in \{1,\ldots,s\}$, we have
\begin{align*}
|N_F(\ell_1,\ell_2,g_1P_i,g_2)- \Theta(P_i) N_F(\ell_1,\ell_2,g_1,g_2)| & \leq 
\frac{\theta(\ell_1)\theta(\ell_2)\Theta(P_i)\Theta(g_1)\Theta(g_2)}{r_1r_2} \times \\
 & M q^{\frac{n}{2}+k_1+k_2}r_1r_2 W(\ell_1)W(\ell_2)W(\widetilde{g_1})W(\widetilde{g_2}),
\end{align*}
since $P_i \mid \frac{x^n-1}{f_1}$.
Finally, for $i\in \{ 1,\ldots,t\}$, we get
\begin{align*}
|N_F(\ell_1,\ell_2,g_1,g_2Q_i)- \Theta(Q_i) N_F(\ell_1,\ell_2,g_1,g_2)| & \leq 
\frac{\theta(\ell_1)\theta(\ell_2)\Theta(Q_i)\Theta(g_1)\Theta(g_2)}{r_1r_2} \times \\
 & M q^{\frac{n}{2}+k_1+k_2}r_1r_2 W(\ell_1)W(\ell_2)W(\widetilde{g_1})W(\widetilde{g_2}),
\end{align*}
since $Q_i \mid \frac{x^n-1}{f_2}$. Combining the inequalities above in \eqref{eq1-crivo}, we obtain
\begin{align*}
N_F(R_1,R_2,x^n-1) & \geq \delta N_F(\ell_1,\ell_2,g_1,g_2) - M q^{\frac{n}{2}+k_1+k_2}r_1r_2 W(\ell_1)W(\ell_2)W(\widetilde{g_1})W(\widetilde{g_2}) \times \\
& \frac{\theta(\ell_1)\theta(\ell_2)\Theta(g_1)\Theta(g_2)}{r_1r_2}
\left(
\sum_{i=1}^u \theta(p_i) + \sum_{i=1}^v \theta(q_i) + \sum_{i=1}^{s} \Theta(P_i) + \sum_{i=1}^{t} \Theta(Q_i)
\right).
\end{align*}

Therefore, from the proof of Theorem \ref{principal}, we have
\begin{align*}
N_F(R_1,R_2,x^n-1) & > \delta \theta 
\left(
q^n- M q^{\frac{n}{2}+k_1+k_2}r_1r_2 W(\ell_1)W(\ell_2)W(\widetilde{g_1})W(\widetilde{g_2})
\right)
 \\
& - M \theta q^{\frac{n}{2}+k_1+k_2}r_1r_2 W(\ell_1)W(\ell_2)W(\widetilde{g_1})W(\widetilde{g_2})
\times \\
& \left(
\sum_{i=1}^u \theta(p_i) + \sum_{i=1}^v \theta(q_i) + \sum_{i=1}^{s} \Theta(P_i) + \sum_{i=1}^{t} \Theta(Q_i)
\right) \\
& = \delta \theta 
\left(
q^n- M q^{\frac{n}{2}+k_1+k_2}r_1r_2 W(\ell_1)W(\ell_2)W(\widetilde{g_1})W(\widetilde{g_2}) \Delta
\right),
\end{align*}
where $\theta= \frac{\theta(\ell_1)\theta(\ell_2)\Theta(g_1)\Theta(g_2)}{r_1r_2}$.
Therefore we get the desired result.
\end{proof}

\section{A particular case}

In this section, we are going to deal with the particular case
where we want to determine the pairs $(q,n)$ such that
there exists a $2$-primitive $2$-normal element $\alpha \in \mathbb{F}_{q^n}$  such that $F(\alpha)$ 
is $3$-primitive
$1$-normal for all $F(x)\in \Upsilon_q(2,1)$.
Thus, from now on, $m_1=2, m_2=1, r_1=2, r_2=3, k_1=2$ and $k_2=1$. Also, all the procedures and numerical calculations are done using SageMath \cite{SAGE}.

Before studying the existence of such elements, we will show a result that will help to bound the function $W(x^n-1)$.

\begin{lemma}[\cite{ABS}, Lemma 4.3]\label{lem-part-case}
The number of monic irreducible factors of $x^n-1$ over $\mathbb{F}_q$ is less than or equal to $\frac{n}{a}+b$, where $(a,b)$ can be chosen among the following pairs:
$$
(1,0), \ \ \left(2, \frac{q-1}{2} \right), \ \ 
\left(3, \frac{q^2+3q-4}{6} \right),
$$

$$
\left(4, \frac{q^3+3q^2+5q-9}{12} \right) \ \ 
\left(5, \frac{3q^4+8q^3+15q^2+22q-48}{60} \right).
$$
\end{lemma}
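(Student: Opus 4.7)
The plan is to count the monic irreducible factors of $x^n-1$ over $\mathbb{F}_q$ by degree. Let $s_k$ denote the number of monic irreducible factors of $x^n-1$ of degree exactly $k$, so that $N_q(n) := \sum_k s_k$ is the quantity to be bounded. An element $\zeta \in \overline{\mathbb{F}}_q$ with $\zeta^n = 1$ lies in $\mathbb{F}_{q^k}$ if and only if its minimal polynomial over $\mathbb{F}_q$ has degree dividing $k$, and the set of $n$-th roots of unity in $\mathbb{F}_{q^k}$ has size $\gcd(n, q^k-1)$. Counting such elements by the degree of their minimal polynomial gives the identity
$$\gcd(n, q^k - 1) = \sum_{j \mid k} j \, s_j \qquad (k \geq 1),$$
which in particular yields $s_1 = \gcd(n, q-1)$, $s_2 = (\gcd(n, q^2-1) - s_1)/2$, and so on.

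The second step is the central inequality. Since the total degree of $x^n-1$ is $n$, we have $\sum_k k\, s_k = n$, and each factor of degree $\geq a$ contributes at least $a$ to this sum, so
$$a \sum_{k \geq a} s_k \;\leq\; n - \sum_{k=1}^{a-1} k\, s_k.$$
Adding $\sum_{k=1}^{a-1} s_k$ to both sides (after dividing by $a$) gives the master bound
$$N_q(n) \;\leq\; \frac{n}{a} + \frac{1}{a}\sum_{k=1}^{a-1}(a-k)\, s_k.$$
This reduces matters to bounding the finite linear combination $\sum_{k=1}^{a-1}(a-k)s_k$ for each specific $a \in \{1,2,3,4,5\}$.

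For the third step, I would proceed case by case. For $a=1$ the inner sum is empty and one recovers the trivial bound $N_q(n) \leq n$. For $a \geq 2$, I would solve the recursion from the first step to express each $s_k$ with $k < a$ as an explicit linear combination of $\gcd(n,q^j-1)$ for $j \leq k$, then apply the crude estimate $\gcd(n,q^j-1) \leq q^j-1$. For example, when $a=3$ one has $2s_1 + s_2 = (3 s_1 + \gcd(n,q^2-1))/2 \leq (3(q-1)+q^2-1)/2 = (q^2+3q-4)/2$, and dividing by $a=3$ recovers the claimed $b = (q^2+3q-4)/6$. The same algebraic procedure, carried through for $a=4$ and $a=5$ after eliminating $s_3$ via $s_3 = (\gcd(n,q^3-1)-s_1)/3$ and $s_4 = (\gcd(n,q^4-1)-s_1-2s_2)/4$, should yield exactly the two remaining constants $b$ in the statement.

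The main obstacle is not conceptual but merely bookkeeping: for $a=5$ one must combine four nested substitutions and verify that the coefficients of $s_1$, $\gcd(n,q^2-1)$, $\gcd(n,q^3-1)$, $\gcd(n,q^4-1)$ simplify to $22, 15, 8, 3$ after clearing denominators, so that dividing by $5a = 60$ gives the claimed expression $(3q^4+8q^3+15q^2+22q-48)/60$. Each verification is a short finite computation, and there is no analytic input beyond the elementary inequality $\gcd(n,q^k-1) \leq q^k-1$.
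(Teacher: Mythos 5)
Your proof is correct and follows essentially the same elementary counting argument as the cited source \cite{ABS}: bound the number of factors of degree $\ge a$ by $n/a$ and count the low-degree factors explicitly via the identity $\gcd(n,q^k-1)=\sum_{j\mid k} j\,s_j$; I have checked that the coefficients $22,15,8,3$ and all five values of $b$ come out exactly as claimed. The only imprecision is the assertion $\sum_k k\,s_k = n$, which holds only when $\gcd(n,q)=1$ --- if the characteristic $p$ divides $n$, the distinct monic irreducible factors of $x^n-1$ are those of $x^{n'}-1$ with $n'$ the $p$-free part of $n$, so $\sum_k k\,s_k = n' \le n$ --- but since your master bound only needs the inequality $\sum_k k\,s_k \le n$, and $\gcd(n,q^k-1)=\gcd(n',q^k-1)$, the argument goes through unchanged.
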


Observe that $\mathbb{F}_{q^n}$ is a finite field of characteristic $p\ge 5,$ since $q^n-1$ must be a
multiple of $2$ and $3$. Observe also that the condition for the existence of a $2$-normal element 
is $\gcd(q^3 -q,n)\ne 1$ (see \cite[Lemma 3.1]{AN}).

Let $A$ be the set of pairs $(q,n)$ such that there exists a $2$-primitive $2$-normal element $\alpha \in \mathbb{F}_{q^n}$ with $F(\alpha)$ being a $3$-primitive $1$-normal element. Hence, from \cite[Lemma 3.1]{AN}, if
$(q,n)\in A$, then $6 \mid q^n-1$ and
$\gcd(q^3 -q,n)\ne 1$.

\begin{proposition}\label{firstbound}
If $6 \mid q^n-1$,
$\gcd(q^3 -q,n)\ne 1$, $n \geq 12$ and $q^n \geq 6.18 \cdot 10^{718}$, then $(q,n) \in A$.
\end{proposition}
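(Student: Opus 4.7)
The plan is to verify the direct sufficient inequality of Theorem \ref{principal} under the stated hypotheses. Substituting $r_1=2$, $r_2=3$, $k_1=2$, $k_2=1$, $m_1=2$, $m_2=1$ gives $M=\max\{2(m_1+m_2),\,m_1+3m_2+1\}=6$, so it suffices to exhibit monic divisors $f_1,f_2$ of $x^n-1$ of degrees $2$ and $1$ for which
$$
q^{n/2-3} \;\geq\; 36\cdot W\!\left(\tfrac{q^n-1}{2}\right) W\!\left(\tfrac{q^n-1}{3}\right) W\!\left(\tfrac{x^n-1}{f_1}\right) W\!\left(\tfrac{x^n-1}{f_2}\right).
$$
The condition $\gcd(q^3-q,n)\ne 1$, via \cite[Lemma 3.1]{AN}, guarantees that a degree-$2$ divisor $f_1$ of $x^n-1$ exists, while $f_2=x-1$ is always available.

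Next, I would bound the arithmetic factors crudely by $W(q^n-1)$ and apply a classical explicit estimate $W(m)\leq c_t\,m^{1/t}$ for an integer $t$ chosen so that $1/2-2/t>0$ --- the value $t=8$ with its well-known constant of order a few thousand is the typical choice. Substituting reduces the problem to verifying
$$
q^{n/4-3} \;\geq\; K\cdot W(x^n-1)^2
$$
for an explicit constant $K$. For the polynomial side, Lemma \ref{lem-part-case} yields $W(x^n-1)\leq 2^{n/a+b}$, where the pair $(a,b)$ is selected to minimize the right-hand side: small $q$ favours a larger $a$ (and the resulting $q$-polynomial $b$), while for larger $q$ smaller $a$ suffices.

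Taking logarithms, the inequality becomes linear in $n$ and $\log q$ of the form
$$
n\Bigl(\tfrac{\log q}{4} - \tfrac{2\log 2}{a}\Bigr) \;\geq\; 3\log q + \log K + 2b\log 2.
$$
I would then split into a small number of regimes according to $q$, checking in each that $n\geq 12$ together with $q^n\geq 6.18\cdot 10^{718}$ validates the inequality. The numerical threshold is calibrated to cover the tightest case, which arises at the smallest admissible prime ($q=5$, forced by $6\mid q^n-1$ and by the fact that the characteristic must be at least $5$) paired with a correspondingly large $n$.

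The main obstacle is this numerical optimization: one must pick the pair $(a,b)$ from Lemma \ref{lem-part-case} judiciously in each regime of $q$, verify that no intermediate case is missed, and confirm the specific threshold $6.18\cdot 10^{718}$. This is a delicate but routine computation, well suited to the SageMath-assisted style of the paper. Once it is done, Theorem \ref{principal} guarantees $N_F\!\left(\tfrac{q^n-1}{2},\tfrac{q^n-1}{3},x^n-1\right)>0$, producing a triple $(\alpha,\beta_1,\beta_2)$ that delivers the desired $2$-primitive $2$-normal element $\alpha$ with $F(\alpha)$ being $3$-primitive $1$-normal, so $(q,n)\in A$.
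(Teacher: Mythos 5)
Your proposal follows essentially the same route as the paper's proof: the same reduction via Theorem \ref{principal} with $M=6$ and $f_2=x-1$, a power-type bound on the integer factors $W\!\left(\frac{q^n-1}{r_i}\right)$, the trivial bounds $2^{n-2},2^{n-1}$ on the polynomial factors for large $q$ supplemented by Lemma \ref{lem-part-case} with regime-dependent $(a,b)$ for small $q$, and the correct identification of $q=5$ as the tight case. The only notable difference is that the paper does not use a $W(m)\leq A_t m^{1/t}$ estimate with $t=8$ here, but rather the bound $W\!\left(\frac{q^n-1}{r_i}\right)<\left(\frac{q^n}{r_i}\right)^{1/9}$ from \cite[Lemma 4.1]{ABS}, which is valid precisely when $q^n\geq 3\mathcal{P}_{265}=6.18\cdot 10^{718}$ --- this is where the stated threshold actually originates, rather than from calibrating the $q=5$ case.
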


\begin{proof}
From Theorem \ref{principal}, we get $M=6$ and the condition
\begin{equation}\label{part-cond1}
q^{\frac{n}{2}-3} \geq 36 \ W(\frac{q^n-1}{2}) W(\frac{q^n-1}{3}) W(\frac{x^n-1}{f_1}) W(\frac{x^n-1}{f_2})
\end{equation}
guarantees $(q,n) \in A$.
Let $m$ be a positive integer and $\mathcal{P}_m$ be the product of the first $m$ prime numbers.
From \cite[Lemma 4.1]{ABS} with $m=265$, we have that if $q^n \geq 3 \mathcal{P}_m= 3 \cdot 2.06 \cdot 10^{718}$, then $W(\frac{q^n-1}{r_i})< (\frac{q^n}{r_i})^{\frac{1}{9}}$ for $i\in\{ 1,2\}$. So that, using the trivial bounds $W(\frac{x^n-1}{f_1}) \leq 2^{n-2}$ and $W(\frac{x^n-1}{f_2}) \leq 2^{n-1}$, the inequality
\begin{equation}\label{part-cond2}
q^{\frac{n}{2}-3} \geq 6^{2-\frac{1}{9}} q^{\frac{2n}{9}} 2^{2n-3}
\end{equation}
implies \eqref{part-cond1}. Applying logarithms and
using that $q^n \geq 3 \mathcal{P}_m$ we have that the inequality
$$
\frac{5}{18}- \frac{\log 4}{\log q}- \frac{3}{n} \geq \frac{\log 6^{2-\frac{1}{9}} - \log 8}{\log 3 \mathcal{P}_m} \approx 0.00079
$$
implies \eqref{part-cond2}. We define $y(n,q):=\frac{5}{18}- \frac{\log 4}{\log q}- \frac{3}{n}$ and we study
the inequality $y(n,q) \geq 0.00079$. 
Note that $(3 \mathcal{P}_m)^{\frac{1}{n}} \le 211$ if and only if $n \ge \frac{\log (3\mathcal{P}_m)}{\log 211} \approx 309.25,$ therefore if 
$n \geq 310$ and $q \geq 211$,
then $q \ge (3 \mathcal{P}_m)^{\frac{1}{n}}$ and
$y(q,n) \geq \frac{5}{18}- \frac{\log 4}{\log 211}- \frac{3}{310}
\approx 0.00079,$ implying that $(q,n) \in A$.

Suppose now that $n \leq 309$, we have $y(q,n) \geq \frac{5}{18}- \frac{n\log 4}{\log (3\mathcal{P}_m)}- \frac{3}{n}$. 
We have that the right-hand side of the last inequality is greater than or equal to $0.00079$ for $12 \leq n \leq 310$. We conclude that $(q,n) \in A$ for $q \geq 211$, $n \geq 12$ and $q^n \geq 3\mathcal{P}_m$.

Now we consider $q<211$, which means that $q \leq 199$ since  $q$ is an odd prime power. From Lemma \ref{lem-part-case}, we get $W(x^n-1) \leq 2^{\frac{n}{2}+\frac{q-1}{2}}$. Thus, the inequality $q^{\frac{n}{2}-3} \geq 6^{2-\frac{1}{9}} q^{\frac{2n}{9}} 2^{n+q-4}$ is a sufficient condition for \eqref{part-cond1}. Using that 
$q^n > 3\mathcal{P}_m$, the last inequality holds if
$$
z(q,n):= \frac{5}{18}- \frac{\log 2}{\log q} - \frac{1}{n} \left( 3+q \frac{\log 2}{\log q} \right) \geq \frac{\log (6^{2-\frac{1}{9}})-\log 16}{\log (3 \mathcal{P}_m)} \approx 0.00037.
$$
Since $q^n>3 \mathcal{P}_m$, we get $n \geq \frac{\log (3\mathcal{P}_m)}{\log q}$ and $z(q,n) \geq \frac{5}{18}- \frac{\log 2}{\log q} - \frac{3 \log q}{\log (3 \mathcal{P}_m)} - \frac{q \log 2}{\log (3\mathcal{P}_m)}$. Therefore, we need to prove that the right hand side of the last inequality is greater than or equal to $0.00037$, which is true for all odd prime power 
$q$ such that $17 \leq q \leq 199$.

For the case $q \leq 13$, we use the estimate $W(x^n-1) \leq 2^{\frac{n}{3}+ \frac{q^2+3q-4}{6}}$ (see Lemma \ref{lem-part-case}) and we proceed analogously to the previous case, showing that 
\eqref{part-cond1} holds for $q=13,11,7$. Finally, for $q=5$, we use the estimate $W(x^n-1) \leq 2^{\frac{n}{4}+\frac{q^3+3q^2+5q-9}{12}}=2^{\frac{n}{4}+18}$ from Lemma \ref{lem-part-case} obtaining that a sufficient condition for \eqref{part-cond1} is $q^{\frac{5n}{18}-3} \geq 6^{2-\frac{1}{9}} 2^{\frac{n}{2}+33}$ which holds for $n \geq 1029$ (because $q^n \ge 3 \mathcal{P}_m$). This completes the proof. 
\end{proof}

Before continuing, we need to explain the procedures in Appendix A.
Procedure \textbf{\ref{At}} 
calculates the constant $A_t$
used in  the bound $W(M) \leq A_t \cdot M^{\frac{1}{t}}$ (see \cite[Lemma 2.9]{AN}).
We bound $W(\frac{q^n-1}{r_i})$ by $r_i^{-\frac{1}{t}} \cdot A_t \cdot q^{\frac{n}{t}}$
($i\in\{ 1,2\}$) in the condition of the last sentence of Theorem \ref{principal} and we get that if
$6 \mid (q^n-1)$, $\gcd(q^3 -q,n)>1$ and
\begin{equation}\label{cond1-crivo-particular}
	q^{\frac{n}{2}-3} \geq 6^{2-\frac{1}{t}} \cdot A_t^2 \cdot q^{\frac{2n}{t}} \cdot W \left( \frac{x^n-1}{f_1} \right) W \left( \frac{x^n-1}{f_2} \right),
\end{equation} 
then $(q,n) \in A$.
Let us explain
procedure \textbf{\ref{valw1w2}}.
If $\gcd(q,n)>1$ then $W(\frac{x^n-1}{f_i})=W(x^n-1)$ for $i \in \{1,2\}$. If $\gcd(q,n)=1$
and
$\gcd(q-1,n)>1$, then $x^n-1$ has at least two linear factors, so that we may choose $f_1$ as the product
of two linear factors and $f_2=x-1$.
If $\gcd(q,n)=1$,
$\gcd(q-1,n)=1$ and $\gcd(q+1,n)>1$, then $x^n-1$ has only one linear factor and at least
one monic irreducible factor of degree $2$. Thus, \textbf{\ref{valw1w2}($q,n$)}
returns the pair $(w_1,w_2)$ where $W(\frac{x^n-1}{f_i}) = 2^{w_i}$. 
Therefore
procedure \textbf{\ref{testAt}} verifies
if \eqref{cond1-crivo-particular} holds.

The value of $\Delta$ given in \textbf{\ref{specialsieve}} 
line \ref{Delta} is greater than or equal to
$\Delta$ from Proposition \ref{prop1-crivo}, since the pair $(S,u_0)$
given by procedure \textbf{\ref{sumfactors}} 
satisfies $u \le u_0 $, $v \le u_0$ and $\delta \ge 1-2S$ for
$(u,v,\delta)$ in Proposition \ref{prop1-crivo}. Thus, if \textbf{\ref{specialsieve}($q,n,p_0$)} 
returns True, then \eqref{crivo} holds.

Finally, procedure \textbf{\ref{totalsieve}}, 
with the auxiliary procedures \textbf{\ref{monicfactors}} 
and \textbf{\ref{listsieve}},
verifies if \eqref{crivo} holds for some choice of $\ell_1$, $\ell_2$, $g_1$ and $g_2$.

\begin{proposition}\label{n12}
If $6 \mid (q^n-1)$,
$\gcd(q^3 -q,n)\ne 1$, $q \geq 5$ and $n \geq 12$, then $(q,n) \in A$.
\end{proposition}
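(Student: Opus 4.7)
The plan is to combine Proposition~\ref{firstbound} with a finite computer verification using the machinery of Appendix~A. Since Proposition~\ref{firstbound} already handles every pair $(q,n)$ with $n \ge 12$ and $q^n \ge 3 \mathcal{P}_{265}$, what remains is a finite set: pairs with $q$ an odd prime power $\ge 5$, $n \ge 12$, $6 \mid q^n-1$, $\gcd(q^3-q, n) > 1$, and $q^n < 3\mathcal{P}_{265}$. This is a large but explicitly enumerable list, and the goal is to show that each remaining pair lies in $A$.

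For each such pair, I would first invoke procedure \textbf{testAt}$(q,n,t)$, which checks condition \eqref{cond1-crivo-particular}. This uses the sharper bound $W(M) \le A_t \cdot M^{1/t}$ of \cite[Lemma 2.9]{AN} (with $A_t$ supplied by \textbf{At}) in place of the cruder $W(M) < M^{1/9}$ used inside Proposition~\ref{firstbound}, together with the exact values of $W(\frac{x^n-1}{f_i})$ returned by \textbf{valw1w2}, which optimizes the choice of $f_1, f_2$ according to which of $\gcd(q,n)$, $\gcd(q-1,n)$, $\gcd(q+1,n)$ is nontrivial. Scanning $t$ over a moderate range (say $8 \le t \le 50$) should eliminate the overwhelming majority of pairs directly via Theorem~\ref{principal}.

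For the residual pairs where even the best $t$ fails, I would apply \textbf{totalsieve}, which loops over compatible tuples $(\ell_1, \ell_2, g_1, g_2)$ with $\ell_i \mid \frac{q^n-1}{r_i}$ and $g_i \mid x^n - 1$, calling \textbf{specialsieve} to verify inequality \eqref{crivo} of Proposition~\ref{prop1-crivo}. The natural strategy (mirroring \textbf{monicfactors} and \textbf{listsieve}) is to let $\ell_i$ be the product of the smallest prime divisors of $\frac{q^n-1}{r_i}$ and to let $g_i$ omit the smallest-degree irreducible factors of $\frac{x^n-1}{f_i}$, all while keeping $\delta > 0$ through the check performed by \textbf{sumfactors}. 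The main obstacle will be the genuinely small cases, namely $q \in \{5, 7, 11\}$ with $n$ near $12$: here $q^{n/2-3}$ is modest while $x^n-1$ can split into many low-degree irreducible factors, so both \eqref{cond1-crivo-particular} and \eqref{crivo} become tight simultaneously. For these pairs one may need to experiment with non-minimal excluded primes/factors and several sieve iterations; in the worst case, an explicit search in $\mathbb{F}_{q^n}$ exhibiting a concrete witness $(\alpha, F(\alpha))$ closes the remaining entries.
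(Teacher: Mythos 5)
There is a genuine gap: the set of pairs you propose to verify by computer is not actually enumerable in practice. After Proposition~\ref{firstbound} the surviving pairs satisfy $q^n < 6.18\cdot 10^{718}$, so for $n=12$ this still allows $q$ to range up to roughly $10^{59}$; one cannot loop over all prime powers in that range, let alone factor $q^n-1$ for each of them. The paper's proof of Proposition~\ref{n12} has an essential intermediate stage that you omit entirely: before any pair-by-pair computation, it applies the sieve of Proposition~\ref{prop1-crivo} \emph{theoretically}, taking $\ell_i$ to be the product of the primes below a threshold $p_0$ dividing $\frac{q^n-1}{r_i}$ and bounding the number $u$ of large sieving primes via $\mathcal{P}_m\cdot\mathcal{P}(u,p_0)\le \mathcal{P}$, which yields $\delta \ge 1-2\mathcal{S}(u(m),p_0)$ and a uniform $\Delta(m)$. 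Iterating this with successively smaller $p_0$ and $\mathcal{P}$ (and with $q\ge q_0$ for $q_0=10009$ and $q_0=10^5+3$) drives the bound down to the quantity $\mathcal{M}_n$ of \eqref{cota-crivo1}, e.g.\ involving $(1.29\cdot 10^{63})^{1/n}$. Only then is the remaining range $5\le q\le \mathcal{M}_n$, $12\le n\le 1028$ small enough for the procedures \textbf{\ref{testAt}}, \textbf{\ref{specialsieve}} and \textbf{\ref{totalsieve}} to be run on each pair (the paper reports $67065$ survivors of the first test and $1915$ of the second). Without this bound-reduction step your plan cannot be executed.

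A secondary, smaller point: your fallback of exhibiting an explicit witness $(\alpha,F(\alpha))$ by direct search in $\mathbb{F}_{q^n}$ does not work for this proposition as stated, because the claim is quantified over \emph{all} $F\in\Upsilon_{q}(2,1)$ (equivalently, membership of $(q,n)$ in $A$ must hold for every admissible rational function), so a single witness for one $F$ proves nothing; one genuinely needs the character-sum criteria to succeed. In the paper no such fallback is required: \textbf{\ref{totalsieve}}, which optimizes over all admissible choices of $\ell_1,\ell_2,g_1,g_2$, closes every remaining case.
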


\begin{proof}
From the previous proposition, if $q^n \geq 6.18 \cdot 10^{718}$, then $(q,n) \in A$.
So that, also assume  $q^n < 6.18 \cdot 10^{718}$.
Let $p_0$ be a fixed prime number.
Let $\ell_i$ be the product of prime numbers $p<p_0$ which divide $\frac{q^n-1}{r_i}$.
Denote by $m$ the number of primes less than $p_0$ which divide $q^n-1$.
Then $2\le m \le \pi(p_0-1)$ 
and $W(\ell_i)\le 2^m$. Let also $\{ p_1, \ldots , p_u \}$ be the set of primes
which divide $q^n-1$ and are greater than or equal to $p_0$ and
denote by $\mathcal{P}(u,p_0)$ and by $\mathcal{S}(u,p_0)$ the product and the sum of the inverses, respectively, of the first $u$ prime numbers greater than or equal to $p_0$.
Hence 
$$\mathcal{P}(u,p_0) \le p_1 \cdots  p_u  \ \ \ \text{and} \ \ \ 
\sum_{i=1}^u \frac{1}{p_i} \le \mathcal{S}(u,p_0).
$$
Let $u(m)=\max \{ u \mid \mathcal{P}_m \cdot \mathcal{P}(u,p_0) \leq  \mathcal{P}\}$,
where $\mathcal{P}=6.18 \cdot 10^{718}$ and
$\mathcal{P}_m$ is defined as in Proposition \ref{firstbound}. 
From Proposition \ref{prop1-crivo}
with $\ell_1$ and $\ell_2$ as given in the last paragraph and $g_1=g_2=x^n-1$, we get
$\delta \geq 1-2\mathcal{S}(u(m),p_0)$ and $\Delta \leq 2 + \frac{2u(m)-1}{1-2\mathcal{S}(u(m),p_0)}=: \Delta(m)$. 
We need to choose $p_0$ such that $1-2\mathcal{S}(u(m),p_0)>0$ in order to use
Proposition \ref{prop1-crivo}. Thus, if $q^{\frac{n}{2}-3} \ge 6^2 \cdot 2^{2m} \cdot 2^{2n-3} \cdot \Delta(m)$
for $2 \le m \le \pi(p_0)-1$, then
\eqref{crivo} holds. 
If we suppose that $q \geq q_0$, 
the last condition holds if
\begin{equation}\label{sieve1}
(q^n)^{\frac{1}{4}-\log_{q_0}4} \geq \max \{ 6^2 \cdot 2^{2m-3} \cdot \Delta(m) \mid 2 \le m \le \pi(p_0)-1\},
\end{equation}
since
$\frac{n}{2}-3 \ge \frac{n}{4}$. For $p_0=89$ and $q_0=10009$, we get
$1-2\mathcal{S}(u(m),p_0)>0$ for all $2 \le m \le 23$, and \eqref{sieve1} holds for
$q^n \geq 1.15 \cdot 10^{190}$. Repeating the process with
$q_0=10009$ and $(p_0,\mathcal{P})=(41,1.15 \cdot 10^{190})$, $(31,8.31 \cdot 10^{118})$, $(29,3.00 \cdot 10^{100})$,
$(29,5.31 \cdot 10^{94})$, $(29,9.01 \cdot 10^{92})$
sequentially,
we obtain the bound $q^n \geq 1.66 \cdot 10^{92}$. Analogously, for $q_0=10^5+3$ and
$(p_0,\mathcal{P})=(29, 1.66 \cdot 10^{92})$, $(23, 6.42 \cdot 10^{70})$,
$(23, 2.33 \cdot 10^{65})$, $(23, 4.10 \cdot 10^{63})${\color{blue},}
we get $q^n \geq 1.29 \cdot 10^{63}$. Therefore, if
\begin{equation}\label{cota-crivo1}
q \geq \mathcal{M}_n:=\min \left\{ \begin{array}{lcc}
              \max \{ 10^5+3, (1.29 \cdot 10^{63})^{1/n} \} \\
              \max \{ 10009, (1.66 \cdot 10^{92})^{1/n} \} \\
              (6.18 \cdot 10^{718})^{1/n},
             \end{array}
   \right. 
\end{equation}
$6 \mid q^n-1$, $\gcd(q^3 -q,n)\ne 1$, $q \geq 5$ and $n \geq 12$, we have $(q,n) \in A$.

In order to prove the proposition, we only need to consider $n \le 1028$, since for $n \geq 1029$ we have $(6.18 \cdot 10^{718})^{\frac{1}{n}}<5$.
procedure \textbf{\ref{testAt}($q,n,t$)}
holds for $12 \leq n \leq 1028$, $5 \leq q \le \mathcal{M}_n$ and $t=8$,
with $6 \mid q^n-1$ and $\gcd(q^3 -q,n)\ne 1$,
except for $67065$ pairs $(q,n)$.
For such elements, \textbf{\ref{specialsieve}($q,n,p_0$)} 
holds with $p_0=71$
if $q^n > 10^{100}$, $p_0=53$ if $10^{30} < q^n \le 10^{100}$, and $p_0=23$ if $q^n\le 10^{30}$ except for $1915$ pairs.
Finally, \textbf{\ref{totalsieve}($q,n$)} 
holds
for all the remaining cases.
\end{proof}

\begin{proposition}\label{casen11-8}
We have $(q,n) \in A$ if one of the following holds.
\begin{enumerate}
\item[(a)]  $6 \mid q^n-1$,
$\gcd(q^3 -q,n)\ne 1$, $q \geq 5$ and $n\in \{ 10,11\}$.
\item[(b)] $6 \mid (q-1)$, $q \ge 4.413\cdot 10^{9}$ and $n=9$.
\item[(c)] $\gcd(q,6)=1$,  $q \ge 6.515\cdot 10^{14}$ and $n=8$.
\end{enumerate}
\end{proposition}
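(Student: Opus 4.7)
The plan is to handle the three cases (a), (b), (c) separately, following exactly the two-step strategy used for Propositions \ref{firstbound} and \ref{n12}: first, combine Theorem \ref{principal} (or the sieved version in Proposition \ref{prop1-crivo}) with explicit bounds on the relevant $W$-functions tailored to each value of $n$; then, reduce to a finite case check and invoke the computational procedures from Appendix~A.

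For part (a), since $n \in \{10, 11\}$ is small, I would use the trivial bounds $W(\frac{x^n-1}{f_1}) \le 2^{n-2}$ and $W(\frac{x^n-1}{f_2}) \le 2^{n-1}$, combined with the sharp bound $W(\frac{q^n-1}{r_i}) \le A_t \cdot q^{n/t} \cdot r_i^{-1/t}$ coming from procedure~\ref{At} for a suitably chosen $t$. The first step is to derive an upper bound $\mathcal{M}_n$ on $q$ beyond which the sufficient condition \eqref{cond1-crivo-particular} automatically holds, in analogy with \eqref{cota-crivo1}. Then for the finitely many $5 \le q \le \mathcal{M}_n$ satisfying $6 \mid q^n-1$ and $\gcd(q^3-q, n) \ne 1$, I would run procedure~\ref{testAt}$(q,n,t)$ for an appropriate $t$; any remaining failures are addressed first by \ref{specialsieve}$(q,n,p_0)$ with graduated choices of $p_0$ depending on the size of $q^n$ (as in the proof of Proposition \ref{n12}), and the final few exceptional pairs by \ref{totalsieve}$(q,n)$.

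For parts (b) and (c), where only a large-$q$ statement is needed, I would apply Proposition \ref{prop1-crivo} with carefully chosen small-prime sieving parameters $\ell_1$, $\ell_2$ and a prime cutoff $p_0$, making essential use of the sharper factorization of $x^n-1$ available under the extra congruence hypotheses. For $n = 9$ with $6 \mid q-1$ one has $q \equiv 1 \pmod{3}$, so $x^2+x+1$ splits, yielding additional linear factors of $x^9-1$; choosing $f_1$ (degree $2$) and $f_2$ (degree $1$) to absorb three of these linear factors tightens $W(\gcd(g_i, \frac{x^n-1}{f_i}))$ substantially. For $n = 8$ with $\gcd(q, 6) = 1$, the base factorization $x^8-1 = (x-1)(x+1)(x^2+1)(x^4+1)$ over $\mathbb{Q}$ together with the further splitting of $x^2+1$ and $x^4+1$ over $\mathbb{F}_q$ (governed by $q \pmod{8}$) again lets me pick $f_1$ and $f_2$ optimally. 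An iterative refinement analogous to the one producing \eqref{cota-crivo1}---choosing sequentially smaller cutoff pairs $(p_0, \mathcal{P})$---then yields the explicit thresholds $4.413 \cdot 10^9$ for $n=9$ and $6.515 \cdot 10^{14}$ for $n=8$.

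The main obstacle I anticipate is tuning the sieve parameters $(\ell_1, \ell_2, g_1, g_2, p_0)$ so that the iteration converges to the stated explicit constants rather than to something orders of magnitude worse. In particular, for $n = 8$ the exponent $\frac{n}{2} - k_1 - k_2 = 1$ in \eqref{crivo} is unforgivingly small, so the savings supplied by the sieve are essential, and a sub-optimal choice of $p_0$ would force a much larger final constant. A secondary, purely technical obstacle in part (a) is that the argument of Proposition \ref{firstbound} required $n \ge 12$ in several estimates; for $n = 10, 11$ those inequalities must be re-verified case-by-case with the appropriate bounds from Lemma \ref{lem-part-case}, producing a workable $\mathcal{M}_n$ small enough for the subsequent computational sweep to terminate.
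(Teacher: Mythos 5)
Your overall two-step template (analytic bound, then machine check) is the right family of argument, but there is a genuine gap in how you propose to get a workable threshold, and it is precisely the step that matters for $n\le 11$. For part (a) you want to extract $\mathcal{M}_n$ from the non-sieve condition \eqref{cond1-crivo-particular}. With $k_1+k_2=3$ the exponent on the left is $\frac n2-3$, which is only $2$ for $n=10$ and $2.5$ for $n=11$, so to beat the factor $q^{2n/t}$ you are forced to take $t>\frac{2n}{n/2-3}\ge 8.8$ (indeed $t>10$ for $n=10$). For such $t$ the constant $A_t$ is a product of $2/p^{1/t}$ over \emph{all} primes $p<2^t$, i.e.\ hundreds of factors each exceeding $1$, so $A_t^2$ is astronomically large and the resulting $\mathcal{M}_n$ lands far beyond anything a computational sweep with \textbf{\ref{testAt}}/\textbf{\ref{specialsieve}}/\textbf{\ref{totalsieve}} could cover. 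The paper's proof does not use \eqref{cond1-crivo-particular} at all here: it first gets a (huge) threshold $3\mathcal{P}_m$ from \cite[Lemma 4.1]{ABS} with $n$-dependent parameters $(N,m)$ (Table \ref{n8to11}), and then runs the \emph{theoretical} sieve of Proposition \ref{prop1-crivo}, in the form \eqref{sieve2}, five or six times with successively smaller cutoffs $p_0$ (Table \ref{crivon8to11}) to drag the bound down to $q^n\gtrsim 10^{65}$ (for $n=11$) and $q^n\gtrsim 10^{72}$ (for $n=10$). Only then is the range $q<883933$, resp.\ $q<1.962\cdot 10^7$, small enough for the computational procedures. Without this iterated analytic sieve, your finite case check does not terminate in practice, and the proof is not complete. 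You do flag the $n\ge 12$ issue, but attributing the fix to Lemma \ref{lem-part-case} misses the point: for fixed $n\le 11$ the bottleneck is not $W(x^n-1)$ (already at most $2^{10}$) but the competition between $q^{n/2-3}$ and $W(\frac{q^n-1}{r_i})^2$.

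For parts (b) and (c) your plan also diverges from what actually produces the stated constants. The paper needs no computation and no refined factorization of $x^8-1$ or $x^9-1$: the congruence hypotheses serve only to guarantee $6\mid q^n-1$ and $\gcd(q^3-q,n)\ne 1$, and the thresholds $4.413\cdot 10^9$ and $6.515\cdot 10^{14}$ are exactly the $n$-th roots of the terminal sieve bounds $6.347\cdot 10^{86}$ and $3.242\cdot 10^{118}$ from the same iteration as in part (a). Your idea of choosing $f_1,f_2$ to absorb linear factors of $x^n-1$ gains nothing over the trivial bounds $W(\frac{x^n-1}{f_1})\le 2^{n-2}$ and $W(\frac{x^n-1}{f_2})\le 2^{n-1}$ already built into the argument (those exponents already discount $\deg f_i$), so it cannot be the mechanism that yields the explicit constants; the savings must come from sieving the prime divisors of $q^n-1$, as in Table \ref{crivon8to11}.
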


\begin{proof}
From \cite[Lemma 4.1]{ABS}, 
if $m$ is a positive integer and
$N$ is a real number satisfying $\frac{m\log(2)}{\log(\mathcal{P}_m)}<\frac{1}{N}$, then
$W\left( \frac{q^n-1}{r_i} \right) \leq \left( \frac{q^n}{r_i} \right)^{\frac{1}{N}}$ 
for $i \in \{1,2\}$ and
for all prime power $q$
such that $\frac{q^n}{3}\ge \mathcal{P}_m$. If we also consider
$W(\frac{x^n-1}{f_1})\le 2^{n-2}$ and 
$W(\frac{x^n-1}{f_2})\le 2^{n-1}$, then
\begin{equation}\label{intermediario}
q^{\frac{n}{2}-3} \geq 6^{2-\frac{1}{N}}\cdot  q^{\frac{2n}{N}} \cdot 2^{2n-3}
\end{equation}
implies \eqref{part-cond1}. Observe that \eqref{intermediario} is equivalent to
$$
q \ge \left( 6^{2-\frac{1}{N}}\cdot  2^{2n-3}\right)^{\frac{2N}{Nn-4n-6N}}.
$$
Since $\frac{q^n}{3}\ge \mathcal{P}_m$, then \eqref{part-cond1} holds if
\begin{equation}\label{intermediario2}
3\mathcal{P}_m \ge \left( 6^{2-\frac{1}{N}}\cdot  2^{2n-3}\right)^{\frac{2Nn}{Nn-4n-6N}}.
\end{equation}
For $8 \le n \le 11$, Table \ref{n8to11} shows the values of $N$ and $m$
for which  \eqref{intermediario2} holds for all prime power $q$ such that $q^n \ge 3\mathcal{P}_m$.
\begin{table}[h]
\centering
\begin{tabular}{cccc}
$n$  & $N$ & $m$ & $3\mathcal{P}_m$  \\
\hline
$11$ & $9.161$  & $291$ &   $\sim 2.717\cdot 10^{803}$\\
$10$ & $10.206$  & $534$ & $\sim  3.819\cdot 10^{1641}$ \\
$9$ & $12.075$  & $1618$ & $\sim  1.488\cdot 10^{5882}$ \\
$8$ & $16.008$  & $18011$ & $\sim 3.980\cdot 10^{86793}$ 
\end{tabular}
\vspace*{0.5cm}
\caption{Values of $N$ and $m$ for which \eqref{intermediario2} holds.}
\label{n8to11}
\end{table}

Suppose now that $q^n < \mathcal{P}$, where $\mathcal{P}$ is the value of the
column of $3\mathcal{P}_m$ in Table \ref{n8to11}, for the respective value of $n$ and
we proceed to use the sieve method like it was done at the beginning of the proof of Proposition \ref{n12}. Then $\Delta(m)$  depends on the
prime number $p_0$ to be chosen,
and we get that
if $6 \mid q^n-1$,
$\gcd(q^3 -q,n)\ne 1$ and
\begin{equation}\label{sieve2}
	q^{\frac{n}{2}-3} \geq \max \{ 6^2 \cdot 2^{2m} \cdot 2^{2n-3} \cdot \Delta(m) \mid 2 \le m \le \pi(p_0)-1\},
\end{equation}
then $(q,n)\in A$. We use this process repeatedly for $n\in \{8,9,10,11\}$, as shown in
Table \ref{crivon8to11}.

{\scriptsize 
\begin{table}[h]
\begin{minipage}{.5\linewidth}
\centering
\begin{tabular}{ccc}
\multicolumn{3}{c}{$n=8$}\\
$\mathcal{P}$ & $p_0$ & new bound  \\
\hline
$3.980\cdot 10^{86793}$ & $1609$ &   $q^n \ge 8.261\cdot 10^{1320}$\\
$8.261\cdot 10^{1320}$  & $131$ & $q^n \ge 1.634\cdot 10^{230}$ \\
$1.634\cdot 10^{230}$   & $47$ & $q^n \ge 1.294\cdot 10^{139}$ \\
$1.294\cdot 10^{139}$   & $37$ & $q^n \ge 7.454\cdot 10^{122}$\\
$7.454\cdot 10^{122}$   & $31$  & $q^n \ge 5.975\cdot 10^{119}$\\
$5.975\cdot 10^{119}$   &  $31$ & $q^n \ge 3.242\cdot 10^{118}$
\end{tabular}
\end{minipage}%
\begin{minipage}{.5\linewidth}
\centering
\begin{tabular}{ccc}
\multicolumn{3}{c}{$n=9$}\\
$\mathcal{P}$ & $p_0$ & new bound  \\
\hline
$1.488\cdot 10^{5882}$ & $313$ &   $q^n \ge 5.923\cdot 10^{301}$\\
$5.923\cdot 10^{301}$  & $53$ & $q^n \ge 1.517\cdot 10^{115}$ \\
$1.517\cdot 10^{115}$   & $31$ & $q^n \ge 5.204\cdot 10^{91}$ \\
$5.204\cdot 10^{91}$   & $29$ & $q^n \ge 3.679\cdot 10^{87}$ \\
$3.679\cdot 10^{87}$   & $29$  & $q^n \ge 6.347\cdot 10^{86}$
\end{tabular}
\end{minipage}

\vspace*{0.5cm}

\begin{minipage}{.5\linewidth}
\centering
\begin{tabular}{ccc}
\multicolumn{3}{c}{$n=10$}\\
$\mathcal{P}$ & $p_0$ & new bound  \\
\hline
$3.819\cdot 10^{1641}$ & $149$ &   $q^n \ge 3.891\cdot 10^{160}$\\
$3.891\cdot 10^{160}$  & $41$ & $q^n \ge 5.414\cdot 10^{85}$ \\
$5.414\cdot 10^{85}$   & $29$ & $q^n \ge 1.155\cdot 10^{75}$ \\
$1.155\cdot 10^{75}$   & $23$ & $q^n \ge 2.874\cdot 10^{73}$ \\
$2.874\cdot 10^{73}$   & $23$  & $q^n \ge 8.442\cdot 10^{72}$
\end{tabular}
\end{minipage}%
\begin{minipage}{.5\linewidth}
\centering
\begin{tabular}{ccc}
\multicolumn{3}{c}{$n=11$}\\
$\mathcal{P}$ & $p_0$ & new bound  \\
\hline
$2.717\cdot 10^{803}$ & $97$ &   $q^n \ge 9.605\cdot 10^{115}$\\
$9.605\cdot 10^{115}$  & $31$ & $q^n \ge 6.726\cdot 10^{72}$ \\
$6.726\cdot 10^{72}$   & $23$ & $q^n \ge 6.673\cdot 10^{66}$ \\
$6.673\cdot 10^{66}$   & $23$ & $q^n \ge 5.224\cdot 10^{65}$ \\
$5.224\cdot 10^{65}$   & $23$  & $q^n \ge 2.574\cdot 10^{65}$
\end{tabular}
\end{minipage}%
\vspace*{0.5cm}
\caption{New bound for $n\in \{8,9,10,11\}$ using the sieving technique.}
\label{crivon8to11}
\end{table}
}

For $n=11$ and $n=10$, if $6 \mid q^n-1$,
$\gcd(q^3 -q,n)\ne 1$ and $q \ge 883933$ (for $n=11$) or $q \ge 1.962\cdot 10^{7}$ (for $n=10$), then $(q,11), (q,10) \in A$.
procedure \textbf{\ref{specialsieve}($q,n,p_0$)} 
holds with $p_0=23$ and $q<883933$ or $q<1.962\cdot 10^{7}$, except for $120$ (for $n=11$) or $7978$ (for $n=10$) pairs.
procedure \textbf{\ref{totalsieve}($q,n$)} 
holds for all the remaining cases.

For $n=9$, 
we have $\gcd(q^3 -q,9)\ne 1$ for all prime power $q$
and $6 \mid (q^9-1)$ if and only if $6 \mid (q-1)$.
Thus we get that if $6 \mid (q-1)$ and $q \ge 4.413\cdot 10^{9}$, then $(q,9)\in A$.
For $n=8$,
we have $\gcd(q^3 -q,8)\ne 1$ for all prime power $q$, 
and $6 \mid (q^8-1)$ if and only if $\mathbb{F}_q$ has characteristic greater than $3$.
Therefore we get that if $\gcd(q,6)=1$, then $(q,8)\in A$.
\end{proof}

The following lemma shows how to use procedure \textbf{\ref{boundsieve}($q,n,p_0,q_{\textrm{min}},q_{\textrm{max}}$)} 
for $n \in \{ 7,8,9\}$.
\begin{lemma}\label{proc-boundsieve}
Let $q_{\textrm{min}}$ and $q_{\textrm{max}}$ be positive integers,
$p_0$ be a prime number, and $n \in \{ 7,8,9\}$. Procedure
\textbf{\ref{boundsieve}($q_{\textrm{min}},q_{\textrm{max}},n,p_0$)}
returns a pair $(q_{\textrm{new}},B)$. If $B=\textrm{true}$,
then for $6 \mid q^n-1$,
$\gcd(q^3 -q,n)\ne 1$, $q_{\textrm{min}}\le q \le q_{\textrm{max}}$ and $q \ge q_{\textrm{new}}$,
we have $(q,n) \in A$.
\end{lemma}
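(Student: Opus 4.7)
The plan is to argue that procedure \textbf{boundsieve}$(q_{\textrm{min}}, q_{\textrm{max}}, n, p_0)$ is a direct numerical implementation of the sieve inequality \eqref{crivo} of Proposition \ref{prop1-crivo}, evaluated over every admissible prime power $q$ in the range. Under the running hypotheses $r_1=2$, $r_2=3$, $k_1=2$, $k_2=1$, $m_1=2$, $m_2=1$, we have $M=6$, so \eqref{crivo} reduces to
$$
q^{n/2-3} \;\ge\; 36 \, W(\ell_1)\, W(\ell_2)\, W(\widetilde{g_1})\, W(\widetilde{g_2})\, \Delta,
$$
and the proof amounts to verifying that, for each $q$ scanned by the procedure, this inequality is certified with an appropriate choice of sieve data.

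First, I would identify the data the procedure uses. Following the pattern of Propositions \ref{n12} and \ref{casen11-8}, for each prime power $q$ with $6 \mid q^n-1$ and $\gcd(q^3-q,n) \ne 1$, the natural choice is to take $\ell_i$ as the largest divisor of $\frac{q^n-1}{r_i}$ composed of primes strictly less than $p_0$, so that the sieved primes $\{p_j\}$ and $\{q_j\}$ of Lemma \ref{lema-sieve} are precisely the primes $\ge p_0$ dividing $\frac{q^n-1}{r_i}$. The divisors $g_1, g_2$ of $x^n-1$ in $\mathbb{F}_q[x]$ are selected from the irreducible factorization (which, for $n \in \{7,8,9\}$, depends only on $q \bmod n$ and therefore splits into a small number of cases) so as to guarantee $\delta > 0$ and an explicitly computable $\Delta$.

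Next, I would describe the scan. The procedure iterates over $q \in [q_{\textrm{min}}, q_{\textrm{max}}]$ restricted to the admissible prime powers, computes $W(\ell_i)$, $W(\widetilde{g_i})$ and $\Delta$ for each candidate, and checks the displayed inequality. The returned $q_{\textrm{new}}$ is the smallest threshold for which \eqref{crivo} holds at every admissible $q \in [q_{\textrm{new}}, q_{\textrm{max}}]$, while $B = \textrm{true}$ simply certifies that such a threshold was successfully found. Once \eqref{crivo} is established for a given $q$, Proposition \ref{prop1-crivo} yields $N_F\!\left(\tfrac{q^n-1}{r_1}, \tfrac{q^n-1}{r_2}, x^n-1\right) > 0$, which by the discussion following Definition \ref{def-NrfmT} produces a $2$-primitive $2$-normal $\alpha \in \mathbb{F}_{q^n}$ with $F(\alpha)$ being $3$-primitive $1$-normal, i.e. $(q,n) \in A$.

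The main obstacle is ensuring that $\delta > 0$ uniformly in $q$ across the range: if $p_0$ is too small relative to the size of $q^n-1$, the sum $\sum 1/p_j + \sum 1/q_j + \sum 1/q^{\deg P_i} + \sum 1/q^{\deg Q_i}$ may reach $1$, invalidating Proposition \ref{prop1-crivo}. This trade-off between $W(\ell_i)$ (which grows with $p_0$) and $\Delta$ (which shrinks as $p_0$ grows) is precisely what the procedure balances; the Boolean flag $B$ is the mechanism that signals whether the chosen $p_0$ was adequate, so the content of the lemma is essentially the observation that when $B = \textrm{true}$ the certified inequality is exactly the hypothesis of Proposition \ref{prop1-crivo}.
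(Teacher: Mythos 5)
Your proposal misses the central idea of the paper's argument and misdescribes how the procedure operates. First, \textbf{BoundSieve} does not scan the admissible prime powers $q\in[q_{\textrm{min}},q_{\textrm{max}}]$ one by one (for $n=7$ the relevant range reaches beyond $10^{58}$, so this would be infeasible); instead it enumerates the finitely many combinatorial configurations $(m,u_1,u_2)$ that the prime factorization of $q^n-1$ could possibly admit, bounds $\Delta$ uniformly over these, and outputs the worst-case threshold $q_{\textrm{new}}=\max\{(6^2\cdot 2^{2m}\cdot\Delta_{u_1,u_2})^{2/(n-6)}\}$. Second, and more importantly, the reason this enumeration is finite and yields a useful bound is an arithmetic observation you do not use at all: writing $n=p^a n_0$ with $p$ the largest prime factor of $n$, any prime dividing $q^n-1$ but not $q^{n/p}-1$ must be congruent to $1$ modulo $\bar p$. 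This splits the prime divisors of $q^n-1$ into two classes subject to the two separate product constraints \eqref{cond-crivo} (against $q_{\textrm{max}}^{n/p}-1$ and $q_{\textrm{max}}^{n}-1$ respectively, with correction factors $e_1,e_2$ for the powers of $2$ and $3$), and it is exactly this splitting that keeps $u_1$ small enough to guarantee $\delta_{u_1,u_2}>0$. Without it one only has the single constraint that the product of all sieved primes is at most $q_{\textrm{max}}^n-1$, which is essentially the weaker \textbf{SpecialSieve} already used elsewhere and not what this lemma certifies.

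A further discrepancy: the paper takes $g_1,g_2$ with $\widetilde g_i=\gcd(g_i,\frac{x^n-1}{f_i})=1$, so $W(\widetilde g_i)=1$ and \emph{all} irreducible factors of $\frac{x^n-1}{f_i}$ are sieved; their contribution to $\delta$ is absorbed into the uniform term $\frac{2n-3}{q_{\textrm{min}}}$ (since $s+t\le 2n-3$ and each summand is at most $1/q_{\textrm{min}}$) and into the $2n-4$ appearing in $\Delta_{u_1,u_2}$. Your choice of $g_i$ ``selected so as to guarantee $\delta>0$'' leaves $W(\widetilde g_1)W(\widetilde g_2)$ in the inequality and does not match the quantity $(6^2\cdot 2^{2m}\cdot\Delta_{u_1,u_2})^{2/(n-6)}$ that the procedure actually computes. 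The final step of your argument---that once \eqref{crivo} holds, Proposition \ref{prop1-crivo} gives $(q,n)\in A$---is correct, but the body of the proof, namely establishing that $B=\textrm{true}$ together with $q\ge q_{\textrm{new}}$ forces \eqref{crivo}, is not supplied by your proposal.
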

\begin{proof}
Let suppose that $6 \mid q^n-1$,
$\gcd(q^3 -q,n)\ne 1$ and $q_{\textrm{min}}\le q \le q_{\textrm{max}}$.
We will prove that if $B=\textrm{true}$ and $q \ge q_{\textrm{new}}$, then 
$(q,n) \in A$.

Let $p$ the greatest prime number which divides $n$, and 
write $n=p^a n_0$, where $\gcd(n,n_0)=1$.
Define
$$
\bar{p}= \left\{ \begin{array}{ll}
	2p^a &   \textrm{if }  2 \nmid p, \\
	2^a  &   \textrm{if }  p=2.
\end{array}
\right.
$$
If $\frak{p}$ is a prime number such that $\frak{p} \mid q^n-1$ and $\frak{p} \nmid q^{\frac{n}{p}}-1$,
then $q^{n_0}$ has multiplicative order $p^a$ modulo $\frak{p}$.
This implies that $\frak{p}$ is of the form
$\bar{p}j+1$.

We will use Proposition \ref{prop1-crivo}. For $i\in\{ 1,2\}$, 
let $g_i\in \mathbb{F}_q[x]$
such that $\widetilde{g}_i=1$, and
let $\ell_i$ be the product of prime numbers which divide $\frac{q^n-1}{r_i}$ that
are less than $p_0$ and are not of the form $\bar{p}j+1$.
Denote by $m$ the number of primes less than $p_0$ which divide $q^n-1$ and are not of the form $\bar{p}j+1$. Then 
$2 \le m \le \pi_{\bar{p}}(p_0 - 1)$, where $\pi_{\bar{p}}(p_0 - 1)$
is the number of primes less than $p_0$ which are not of the form $\bar{p}j+1$.
We also have $W(\ell_i)\le 2^m$. 

Let $u_1$ be the number of primes
which divide $q^n-1$, are greater than or equal to $p_0$, and
are not of the form $\bar{p}j+1$. Let also
$u_2$ be the number of primes
which divide $q^n-1$, are greater than or equal to $p_0$ and
are of the form $\bar{p}j+1$.
Let $\mathcal{P}_0(m)$ be the product of the first $m$ prime numbers which are not of the form
$\bar{p}j+1$. Let $\mathcal{P}_1(u_1,p_0)$ and $\mathcal{S}_1(u_1,p_0)$ be the product and the sum of the inverses, respectively, of the first $u_1$ prime numbers greater than or equal to $p_0$
which are not of the form $\bar{p}j+1$.
Let also $\mathcal{P}_2(u_2)$ and $\mathcal{S}_2(u_2)$ be the product and the sum of the inverses, respectively, of the first $u_2$ prime numbers 
which are of the form $\bar{p}j+1$.

So that, since $q \le q_{\textrm{max}}$, the inequalities
\begin{equation}\label{cond-crivo}
e_1  \mathcal{P}_0(m)  \mathcal{P}_1(u_1,p_0) \le q_{\textrm{max}}^{\frac{n}{p}} -1
\textrm{ and }
e_2  \mathcal{P}_0(m) \mathcal{P}_1(u_1,p_0) \mathcal{P}_2(u_2)
\le q_{\textrm{max}}^n -1
\end{equation}
hold,
where $e_1$ and $e_2$ are defined as follows. If $n=7$, then $e_1=e_2=1$.
If $n=8$, then $e_1=2^3$ and $e_2=2^4$, since
$2 \mid q-1$, $2^3 \mid q^2-1$, $2^4\mid q^4-1$, $2^5 \mid q^8-1$, and
$2 \cdot 3$ appears in the product $\mathcal{P}_0(m)$. If $n=9$  then $e_1=3$
and $e_2=3^2$, since $3 \mid q-1$, $3^2 \mid q^3-1$ and $3^3 \mid q^9-1$.

Considering that $q \geq q_{min}$, we have
\begin{eqnarray*}
\delta \ge 1 - 2 (\mathcal{S}_1(u_1,p_0) + \mathcal{S}_2(u_2)) -\frac{2n-3}{q_{min}} =: \delta_{u_1,u_2}
\text{ and } \\
\Delta \le 2 + \frac{2(u_1+u_2)+2n-4}{\delta_{u_1,u_2}} =: \Delta_{u_1,u_2}.
\end{eqnarray*}

Let $\mathcal{U}(p_0)$ be the set of triples $(m,u_0,u_1)$ satisfying
$2 \le m \le \pi_{\bar{p}}(p_0 - 1)$ and \eqref{cond-crivo}. If $\delta_{u_1,u_2}>0$
for all $(m,u_1,u_2)\in \mathcal{U}(p_0)$, then 
\eqref{crivo} holds for all prime power $q$ such that
$q^{\frac{n}{2}-3} \ge 6^2 \cdot 2^{2m} \cdot \Delta_{u_1,u_2}$
for all $(m,u_1,u_2)\in \mathcal{U}(p_0)$. The last sentence is equivalent to
\begin{equation}\label{cond-n9-7}
q \ge q_{\textrm{new}}:=\max \left\{
(6^2\cdot 2^{2m} \cdot \Delta_{u_1,u_2})^{\frac{2}{n-6}} \mid \forall (m,u_1,u_2)\in \mathcal{U}(p_0)
\right\}.
\end{equation}
This completes the proof.
\end{proof}

\begin{proposition}\label{casen8-9}
If
$6 \mid q^n-1$
and $n\in \{ 8,9\}$,
then
$(q,n) \in A$.
\end{proposition}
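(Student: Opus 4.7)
The plan is to finish the two cases left open by Proposition \ref{casen11-8}, namely $n=9$ with $q < 4.413\cdot 10^{9}$ and $n=8$ with $q < 6.515\cdot 10^{14}$. In both cases the strategy is to apply the specialized sieve of Lemma \ref{proc-boundsieve} (via procedure \textbf{\ref{boundsieve}}) iteratively to bring the bound on $q$ down to a size where the remaining admissible pairs can be checked one by one with the computational procedures \textbf{\ref{testAt}}, \textbf{\ref{specialsieve}} and \textbf{\ref{totalsieve}}, exactly as was done at the end of the proofs of Proposition \ref{n12} and Proposition \ref{casen11-8}.

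For $n=9$ I would exploit that $9 = 3^2$, so $\bar p = 2\cdot 3^2 = 18$ and every prime $\mathfrak{p}$ dividing $q^{9}-1$ but not $q^{3}-1$ must satisfy $\mathfrak{p}\equiv 1\pmod{18}$; this is precisely the restriction Lemma \ref{proc-boundsieve} uses to trim both $\Delta_{u_1,u_2}$ and the growth rate of the sieve parameters. Starting from $q_{\max} = \lceil 4.413\cdot 10^{9}\rceil$ I call \textbf{\ref{boundsieve}} with a decreasing sequence of primes $p_0$, at each stage obtaining a smaller $q_{\textrm{new}}$, much as in Table \ref{crivon8to11}. When the iteration stabilises near a few thousand I sweep the residual pairs $(q,9)$ with $6\mid q-1$ through \textbf{\ref{testAt}($q,9,t$)}, then \textbf{\ref{specialsieve}($q,9,p_0$)}, and finally \textbf{\ref{totalsieve}($q,9$)}, which by construction is powerful enough to certify membership in $A$ as soon as inequality \eqref{crivo} can hold for some admissible $(\ell_1,\ell_2,g_1,g_2)$.

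The case $n=8$ is treated in the same spirit. Here $8 = 2^3$ gives $\bar p = 2^{3} = 8$, so every prime $\mathfrak{p}\mid q^{8}-1$ with $\mathfrak{p}\nmid q^{4}-1$ lies in the arithmetic progression $8j+1$, which is the key ingredient that makes Lemma \ref{proc-boundsieve} effective. Beginning with $q_{\max}=\lceil 6.515\cdot 10^{14}\rceil$ and $\gcd(q,6)=1$, I iterate \textbf{\ref{boundsieve}} over a decreasing sequence of cutoffs $p_0$, extract the updated bound at each step, and continue until the bound is small enough for the remaining finitely many pairs $(q,8)$ to be verified directly by \textbf{\ref{testAt}}, \textbf{\ref{specialsieve}} and \textbf{\ref{totalsieve}} in turn, pair by pair.

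The hard part will be $n=8$: since $\frac{n}{2}-3 = 1$, inequality \eqref{crivo} reads $q \ge M r_1 r_2 W(\ell_1)W(\ell_2)W(\widetilde{g_1})W(\widetilde{g_2})\Delta\cdot q^{k_1+k_2}$-free of further help from the left-hand exponent, so each sieving round improves the bound only modestly and many iterations are needed before the tail becomes computable; additionally, the set of remaining $(q,8)$ after the sieve stabilises is likely to be considerably larger than in the $n\ge 9$ cases, placing the whole weight of the argument on the full sieve \textbf{\ref{totalsieve}}. The congruence restriction $\mathfrak{p}\equiv 1\pmod{8}$ provided by Lemma \ref{proc-boundsieve} is precisely what keeps this residual computation within feasible limits.
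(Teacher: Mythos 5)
Your overall strategy coincides with the paper's: start from the bounds supplied by Proposition \ref{casen11-8}, run the restricted sieve of Lemma \ref{proc-boundsieve} (procedure \textbf{\ref{boundsieve}}) using the progressions $18j+1$ for $n=9$ and $8j+1$ for $n=8$, and dispose of the surviving prime powers with \textbf{\ref{specialsieve}} and \textbf{\ref{totalsieve}}. For $n=9$ this is exactly what the paper does: iterating \textbf{\ref{boundsieve}} with $p_0=19,17,13,13$ brings the bound from $4.413\cdot 10^{9}$ down to $q<62416$, and the remaining $3182$ prime powers with $6\mid(q-1)$ are settled by \textbf{\ref{totalsieve}}.

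For $n=8$, however, there is a genuine gap. The iteration you propose does not converge to a computable range on its own: the paper's own runs of the unmodified \textbf{\ref{boundsieve}} stall at about $q<1.781\cdot 10^{9}$, after which further iterations give essentially no improvement (the successive outputs $4.998\cdot 10^{9}$, $2.069\cdot 10^{9}$, $1.921\cdot 10^{9}$, $1.781\cdot 10^{9}$ are visibly flattening). To go further the paper needs two additional ideas you do not supply: (i) a case division according to whether $9\mid q^{8}-1$ or not, each branch coming with an ad hoc modification of \textbf{\ref{boundsieve}} (adjusting $e_1,e_2$ to absorb the extra powers of $3$, or replacing $2^{2m}$ by $2^{2m-1}$ because $3\nmid\ell_2$); and (ii) a further restriction of the loop over $m$ so that the case in which \emph{all} small primes outside the progression $8j+1$ divide $q^{8}-1$ is isolated, leaving only a few thousand prime powers in the upper range that satisfy a strong explicit divisibility condition such as $629909280\mid(q^{8}-1)$. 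Only with these refinements does the bound drop to roughly $3.4\cdot 10^{8}$ (resp.\ $1.45\cdot 10^{8}$), where the residual direct verification, although still involving millions of prime powers, is feasible. As written, your plan leaves every prime power $q<1.781\cdot 10^{9}$ with $\gcd(q,6)=1$ to be checked individually, each check requiring the factorization of $q^{8}-1$; this is an order of magnitude beyond what the paper actually computes and cannot be waved through as "verified directly, pair by pair."
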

\begin{proof}
In the proof of Proposition \ref{casen11-8} we have already seen that $\gcd(q^3-q,n)\neq 1$ for $n \in \{ 8,9\}$.	
Let suppose that $6 \mid q^n-1$. For $n=9$,
from  Proposition \ref{casen11-8}, if $q \ge 4.413\cdot 10^{9}$, then $(q,9)\in A$. Let suppose
that $q<4.413\cdot 10^{9}$. We now use Lemma \ref{proc-boundsieve}.
For $q_{\textrm{min}}=10^4,q_{\textrm{max}}=4.413\cdot 10^{9},n=9,p_0=19$,
procedure \textbf{\ref{boundsieve}} returns $q_{\textrm{new}}< 585229$
and $B=\textrm{true}$.
For $q_{\textrm{min}}=10^4,q_{\textrm{max}}=585229,n=9,p_0=17$,
procedure \textbf{\ref{boundsieve}} returns $q_{\textrm{new}}< 128243$
and $B=\textrm{true}$.
For $q_{\textrm{min}}=10^4,q_{\textrm{max}}=128243,n=9,p_0=13$,
procedure \textbf{\ref{boundsieve}} returns $q_{\textrm{new}}< 65337$
and $B=\textrm{true}$.
For $q_{\textrm{min}}=10^4,q_{\textrm{max}}=65337,n=9,p_0=13$,
procedure \textbf{\ref{boundsieve}} returns $q_{\textrm{new}}< 62416$
and $B=\textrm{true}$. Finally, \textbf{\ref{totalsieve}} holds for all the $3182$ prime powers $q<62416$ satisfying $6 \mid (q-1)$.

For $n=8$, from 
Proposition \ref{casen11-8}, if $q \ge 6.515\cdot 10^{14}$, then $(q,8)\in A$. Let suppose
that $q<6.515\cdot 10^{14}$. We now use Lemma \ref{proc-boundsieve}.
For $q_{\textrm{min}}=10^9,q_{\textrm{max}}=6.515\cdot 10^{14},n=8,p_0=37$,
procedure \textbf{\ref{boundsieve}} returns $q_{\textrm{new}}<6.226\cdot 10^{10}$
and $B=\textrm{true}$. Now, we use repeatedly procedure  \textbf{\ref{boundsieve}}
with $q_{\textrm{min}}=10^9,n=8$ and $p_0=29$. 
Starting with $q_{\textrm{max}}=6.226\cdot 10^{10}$,
we obtain successively $q_{\textrm{new}}<4.998\cdot 10^9$, $q_{\textrm{new}}<2.069\cdot 10^9$,
$q_{\textrm{new}}<1.921\cdot 10^9$, and $q_{\textrm{new}}<1.781\cdot 10^9$.


To reduce the bound, we will consider two cases.
Suppose first that $9 \mid q^8-1$. 
We  use procedure \textbf{\ref{boundsieve}} by modifying line 7. If
$9 \mid q^8-1$, then $q\equiv \pm 1 \pmod 9$, thus $9 \mid q^2-1$ and
$9 \mid q^4-1$.
We replace line 7 by
$e_1=2^3\cdot 3$ and $e_2=2^4 \cdot 3$.
For $q_{\textrm{min}}=10^9,q_{\textrm{max}}=1.781\cdot 10^{9},n=8,p_0=29$.
This modified procedure \textbf{\ref{boundsieve}} returns
$q_{\textrm{new}}<1.572\cdot 10^{9}$. If in addition we suppose that not all prime numbers
less than $29$ and are not  of the form $8j+1$ divide $q^8-1$,
we replace line 11 by $m \in \{ 2,\ldots, m_{max}-1\}$.
We fix $q_{\textrm{min}}=10^8,n=8,p_0=29$, and, beginning with 
$q_{\textrm{max}}=1.572\cdot 10^{9}$,
this second modified procedure \textbf{\ref{boundsieve}} returns successively
$q_{\textrm{new}}<4.453\cdot 10^{8}$,
$q_{\textrm{new}}<3.671\cdot 10^{8}$ and
$q_{\textrm{new}}<3.429\cdot 10^{8}$.
There are $2923$ prime powers
between $3.429\cdot 10^8$ and $1.572\cdot 10^{9}$ satisfying
$2^5\cdot 3^2 \cdot 5 \cdot 7 \cdot 11 \cdot 13 \cdot 19 \cdot 23=
629909280 \mid (q^8-1)$,
and
procedure \textbf{\ref{specialsieve}($q,n,p_0$)} holds
with $n=8$ and $p_0=11$ for these prime powers, except for $1564$ prime powers.
Procedure \textbf{\ref{totalsieve}($q,n$)} 
holds for all the remaining cases.

Suppose now that $9 \nmid q^8-1$. In this case $3 \nmid \ell_2$, so 
$W(\ell_2)=2^{m-1}$. So that, we replace line 22 in the original
procedure \textbf{\ref{boundsieve}} by
$Val \gets (6^2\cdot \Delta \cdot 2^{2m-1})^{\frac{2}{n-6}}$.
For $q_{\textrm{min}}=10^8,q_{\textrm{max}}=1.781\cdot 10^{9},n=8,p_0=29$,
this modified procedure \textbf{\ref{boundsieve}} returns
 successively
$q_{\textrm{new}}<8.905\cdot 10^{8}$ and
$q_{\textrm{new}}<7.341\cdot 10^{8}$. As in the previous case, if in addition we suppose
that not all prime powers
less than $29$ and which are not  of the form $8j+1$ divide $q^8-1$,
we replace line 11 by $m \in \{ 2,\ldots, m_{max}-1\}$. So,
for $q_{\textrm{min}}=10^8,q_{\textrm{max}}=7.341\cdot 10^{8},n=8,p_0=29$,
the modified procedure \textbf{\ref{boundsieve}} returns successively
$q_{\textrm{new}}<2.227\cdot 10^{8}$,
$q_{\textrm{new}}<1.715\cdot 10^{8}$, and
$q_{\textrm{new}}<1.450\cdot 10^{8}$.


We use now procedure \textbf{\ref{specialsieve}($q,n,p_0$)}
for prime powers
between $1.450\cdot 10^8$ and $7.341\cdot 10^{8}$ satisfying
$2^5\cdot 3 \cdot 5 \cdot 7 \cdot 11 \cdot 13 \cdot 19 \cdot 23=209969760 \mid (q^8-1)$
and $9 \nmid (q^8-1)$. There are $4470$ of those primes powers and
for these prime powers procedure \textbf{\ref{specialsieve}($q,n,p_0$)} holds
with $n=8$ and $p_0=11$, except for $1596$ prime powers.
Procedure \textbf{\ref{totalsieve}($q,n$)} 
holds for all the remaining cases.

There are $3421707$ odd prime powers between $1.450\cdot 10^8$ and $3.429\cdot 10^8$
satisfying $9 \mid (q^8-1)$. For these prime powers, procedure \textbf{\ref{specialsieve}($q,n,p_0$)} holds
with $n=8$ and $p_0=11$, except for $1053563$ prime powers.
Procedure \textbf{\ref{totalsieve}($q,n$)} 
holds for all these exceptions.

Finally, there are $4090405$ prime powers $q< 1.450\cdot 10^8$
with $6 \mid q-1$. For these prime powers \textbf{\ref{specialsieve}($q,n,p_0$)} holds
with $n=8$ and $p_0=7$, except for $1660117$ prime powers. Procedure \textbf{\ref{totalsieve}($q,n$)} 
holds for all the remaining cases. 
\end{proof}

\begin{proposition}\label{casen7}
For $q \geq 5.259\cdot 10^{15}$
and $n=7$, 
we have $(q,7) \in A$ if and only if
$6 \mid q-1$ and $q\equiv 0,\pm 1 \pmod 7$.
\end{proposition}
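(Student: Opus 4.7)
The plan is to prove both directions of the equivalence for $n=7$. The ``only if'' direction is essentially setup: membership in $A$ requires $6\mid q^n-1$ (so that $r_1=2$ and $r_2=3$ are divisors of $q^n-1$) and $\gcd(q^3-q,n)\ne 1$ (the existence criterion for $2$-normal elements cited from \cite[Lemma 3.1]{AN}). For $n=7$, since $q^7\equiv q\pmod 2$ trivially and $q^7\equiv q\pmod 3$ by Fermat, the condition $6\mid q^7-1$ reduces to $6\mid q-1$; and $\gcd(q^3-q,7)\ne 1$ amounts to $7\mid q(q-1)(q+1)$, i.e. $q\equiv 0,\pm 1\pmod 7$.

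For the ``if'' direction, I would follow the template of Proposition \ref{casen8-9} and apply Lemma \ref{proc-boundsieve} iteratively. First, establish an initial (very large) upper bound on $q$ by combining Theorem \ref{principal} with the $W$-bound from \cite[Lemma 4.1]{ABS}: for $n=7$ the master inequality reads $q^{1/2}\ge 6^{2-1/N}\cdot q^{14/N}\cdot 2^{11}$, which forces $N>28$ and yields a very large starting threshold. From this starting point one iterates procedure \textbf{\ref{boundsieve}} with $n=7$ (for which $\bar p=14$, so primes dividing $q^7-1$ split into those dividing $q-1$ and those $\equiv 1\pmod{14}$), decreasing $p_0$ and $q_{\max}$ at each round until the output stabilizes near the stated threshold $5.259\cdot 10^{15}$. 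At each iteration, the two-class accounting in Lemma \ref{proc-boundsieve} (with $u_1, u_2$ counting the two prime types separately) controls $\delta$ and $\Delta$, while choosing $g_i$ with $\widetilde g_i=1$ eliminates the factors $W(\widetilde g_i)$ in inequality \eqref{crivo}.

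The main obstacle is engineering the sieve so that the iteration actually converges down to $5.259\cdot 10^{15}$ rather than stalling much higher. This is delicate for $n=7$ because the exponent $n/2-k_1-k_2 = 1/2$ on the left-hand side of \eqref{crivo} is the smallest encountered in the paper, so each sieve pass buys only a modest reduction in $q_{\max}$; the splitting of primes into $\bar p$-type classes, which genuinely exploits the cyclotomic factorization $q^7-1=(q-1)\Phi_7(q)$, is essential to push below the $10^{16}$ range. Unlike Propositions \ref{n12}--\ref{casen8-9}, we are not asked here to treat the residual range $q<5.259\cdot 10^{15}$ by direct enumeration and procedure \textbf{\ref{totalsieve}}; the present proposition only asserts $(q,7)\in A$ above that threshold, leaving the finitely many small exceptions (presumably) for a later case analysis.
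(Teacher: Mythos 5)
Your reduction of the conditions ($6\mid q^7-1 \Leftrightarrow 6\mid q-1$ and $\gcd(q^3-q,7)\neq 1 \Leftrightarrow q\equiv 0,\pm 1\pmod 7$) matches the paper, and the final stage of your plan (iterating procedure \textbf{\ref{boundsieve}} with $\bar p=14$ down to $5.259\cdot 10^{15}$) is indeed how the paper finishes. But there is a genuine gap in the middle: you cannot pass directly from the initial astronomical bound to procedure \textbf{\ref{boundsieve}}. That procedure sieves over \emph{all} primes $\ge p_0$ dividing $q^7-1$, and the number of such primes is of order $\log(q_{\max}^7)/\log p_0$; when $q_{\max}$ is of size $10^{572158}$ (or the even larger threshold your Theorem~\ref{principal}-based start would produce, since $N>28$ forces $m$ of order $2^{28}$ in \cite[Lemma 4.1]{ABS} and hence $\mathcal{P}_m$ around $10^{10^9}$), the sum of reciprocals $\mathcal{S}_1+\mathcal{S}_2$ far exceeds $1/2$, so $\delta\le 0$ and \textbf{\ref{boundsieve}} returns $B=\textrm{false}$. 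The iteration never gets started.

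The paper bridges this with two bespoke applications of Proposition~\ref{prop1-crivo} that your proposal omits. First, the initial bound itself is obtained by a sieve, not by Theorem~\ref{principal}: taking $\ell_i$ to be the product of the prime divisors of $\frac{q^7-1}{r_i}$ lying outside $[2^{20},2^{30}]$ and sieving over the (at most $54318003$) primes in that window, whose reciprocal sum is about $\ln(3/2)<1/2$; the factor $W(\ell_i)$ is then controlled by $B\,q^{7/30}$ with $B=\prod_{p<2^{20}}2p^{-1/30}$, giving $q\ge 8.52\cdot 10^{572158}$. Second, and crucially, one takes $\ell_i=\frac{q-1}{r_i}$ and $g_i$ with $\gcd(g_i,\frac{x^7-1}{f_i})=1$, so that the sieving primes are exactly the primes $\equiv 1\pmod{14}$ dividing $\Phi_7(q)$: these are sparse enough that even for $q$ up to $10^{572158}$ one gets $u\le 476020$ and $\mathcal{S}_u\le 0.29162$, hence $\delta>0$, while $W(\frac{q-1}{r_i})\le A_t q^{1/t}$ costs only $q^{2/t}$ with $t=7.12$ (possible because $\ell_i$ now has size $q$, not $q^7$). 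This step collapses the bound to $3.47\cdot 10^{58}$, and only then is \textbf{\ref{boundsieve}} applicable (first with $p_0=37$, then repeatedly with $p_0=19$) to reach $5.259\cdot 10^{15}$. You correctly identified that ``engineering the sieve so that the iteration actually converges'' is the main obstacle, but the specific device that overcomes it --- exploiting $q^7-1=(q-1)\Phi_7(q)$ by putting all of $\frac{q-1}{r_i}$ into $\ell_i$ and sieving only the $14j+1$ primes --- is missing from your argument.
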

\begin{proof}
Note first that $6 \mid q^7-1$ is equivalent to $6 \mid q-1$ and 
$\gcd(q^3 - q,7)\ne 1$ is equivalent to $q\equiv 0,\pm 1 \pmod 7$. 

First we apply the sieve method from Proposition \ref{prop1-crivo} 
to obtain an initial bound. Let $\ell_i$ be the product of prime numbers $p$ which divide $\frac{q^7-1}{r_i}$ ($i\in\{ 1,2\}$)
such that $p<2^{20}$ or $p>2^{30}$. Therefore the sets $\{p_1,\ldots, p_u\}$, $\{q_1,\ldots, q_v\}$ in Lemma \ref{lema-sieve}
are equal to the set of primes dividing
$q^7-1$ which are between $2^{20}$ and $2^{30}$. 
Consider also $g_1=g_2=1$ and
$q\ge 10^9$.  We get
$$
\delta  \ge 1 - 2\mathcal{S} - \frac{s+t}{10^9}
\textrm{ and }
\Delta = 2+ \frac{u+v+s+t-1}{\delta} \le 2+\frac{2\cdot 54318003 + 6+5-1}{\delta},
$$
where $s \le 6$, $t \le 5$,
$\mathcal{S}$ is the sum of the inverses of prime numbers between $2^{20}$ and $2^{30}$
and $54318003$ is the number of primes between $2^{20}$ and $2^{30}$.

From \cite[Lemma 2.9]{AN}, we have
$$
\frac{W(l_i)}{\sqrt[30]{l_i}} = 
\prod_{{\substack{p \mid l_i \\ p \text{ is prime}}}} \frac{2}{\sqrt[30]{p}}
\le
\prod_{{\substack{p < 2^{20} \\ p \text{ is prime}}}} \frac{2}{\sqrt[30]{p}} =: B.
$$
We obtain the bound $W(l_i)\le B q^{\frac{7}{30}}$ with $B < 6.8777\cdot 10^{9530}$. 
Putting all together, we have that if
$$
q^{\frac{1}{2}} \ge 6^2 \cdot B^2 \cdot q^{\frac{14}{30}} \cdot \Delta,
$$
then \eqref{crivo} holds. Hence,
for $q \ge 8.5184\cdot 10^{572158}$, we have $(q,7)\in A$.

We use again Proposition \ref{prop1-crivo}, with $\ell_i=\frac{q-1}{r_i}$ and
$g_i$ satisfying $\gcd(g_i,\frac{x^7-1}{f_i})=1$, for $i \in \{ 1,2\}$. Therefore the sets
$\{p_1,\ldots,p_u\}$ and $\{q_1,\ldots,q_v\}$ are equal, and are composed
by prime numbers of the form $14j+1$. Denote by $\mathcal{P}_u$ and $\mathcal{S}_u$
the product and the sum of the inverses, respectively, of the first $u$ prime numbers
of the form $14j+1$. Suppose also that $q \ge 10^7$. Since $\mathcal{P}_u\le \frac{q^7-1}{q-1}$ and
$q < 8.5184\cdot 10^{572158}$, we get $u \le 476020$,
$\mathcal{S}_u\le 0.29162$,
$\delta \ge 1 - 2 \cdot 0.29162 - \frac{6+5}{10^7}=0.4167589$, and
$\Delta \le 2 + \frac{2 \cdot 476020 + 13 -1}{0.4167589}\le 2.285\cdot 10^6$.
Using that $W(\frac{q-1}{r_i}) \leq A_t \cdot (\frac{q}{r_i})^{\frac{1}{t}}$ (see \cite[Lemma 2.9]{AN}), we get that 
$$
q^{\frac{1}{2}} \ge  6^{2-\frac{1}{t}}\cdot A_t^2 \cdot q^{\frac{2}{t}} \cdot  2.285\cdot 10^6
$$
implies the condition \eqref{crivo} for some $t>4$.
With $t=7.12$, the previous inequality holds for $q \ge 3.4726\cdot 10^{58}$.

Fix $q_{\textrm{min}}=10^9$ and $n=7$. For $q_{\textrm{max}}=3.4726\cdot 10^{58}$ and $p_0=37$,
procedure \textbf{\ref{boundsieve}} returns $q_{\textrm{new}}<1.914\cdot 10^{22}$.
Now, we also fix $p_0=19$ and, for $q_{\textrm{max}}=1.914\cdot 10^{22}$, we get
successively  $q_{\textrm{new}}<2.073\cdot 10^{17}$,
$q_{\textrm{new}}<1.011\cdot 10^{16}$,
$q_{\textrm{new}}<5.565\cdot 10^{15}$, and
$q_{\textrm{new}}<5.259\cdot 10^{15}$.
%
%
\end{proof}


We may summarize the results of this section in the following theorem.
\begin{theorem}\label{principalnumerical}
	Let $q$ be a prime power, $n$ be a natural number and $F \in \Upsilon_q(2,1)$. If
	$n\ge 8$, 
	there exists 
	$2$-primitive $2$-normal element $\alpha \in \mathbb{F}_{q^n}$
	such that $F(\alpha)$ is $3$-primitive $1$-normal if and only if
	$6 \mid (q^n-1)$ and $\gcd(q^3-q,n)\neq 1$. 
	For $n=7$, if $q \geq 5.259 \cdot 10^{15}$ then
	there exists a $2$-primitive $2$-normal element $\alpha \in \mathbb{F}_{q^n}$
	such that $F(\alpha)$ is $3$-primitive $1$-normal if and only if
	$6 \mid (q-1)$ and $7 \mid (q^3-q)$.
\end{theorem}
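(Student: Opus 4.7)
The plan is to prove the theorem by splitting into necessity and sufficiency, and then performing a case analysis on $n$ for the sufficient direction, invoking the propositions already proved in the section.

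For necessity, I would argue as follows. Since $\mathbb{F}_{q^n}^*$ is cyclic of order $q^n-1$, the existence of an $r_1$-primitive element (with $r_1=2$) requires $2 \mid (q^n-1)$, and the existence of a $3$-primitive element in $\mathbb{F}_{q^n}^*$ requires $3 \mid (q^n-1)$; thus $6 \mid (q^n-1)$. For the existence of a $k_1$-normal element with $k_1=2$, the hypothesis $\gcd(q^3-q,n)\neq 1$ comes from \cite[Lemma 3.1]{AN}. For the special case $n=7$, observe that $q^7\equiv q\pmod{6}$ whenever $\gcd(q,6)=1$, so $6\mid (q^7-1)$ is equivalent to $6 \mid (q-1)$; moreover $\gcd(q^3-q,7)\neq 1$ reduces to $7 \mid q(q-1)(q+1)$, i.e.\ $q\equiv 0,\pm 1 \pmod 7$.

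For the sufficient direction with $n \geq 8$, I would carry out a straightforward case split by $n$, in each case applying a result already established. For $n \geq 12$, the statement is exactly Proposition \ref{n12}. For $n\in\{10,11\}$, it is Proposition \ref{casen11-8}(a). For $n\in\{8,9\}$, Proposition \ref{casen8-9} gives the statement directly (noting that $\gcd(q^3-q,n)\neq 1$ is automatic for $n \in \{8,9\}$, as used inside the proof of Proposition \ref{casen8-9}, so the only remaining assumption is $6\mid (q^n-1)$). Combining these three ranges covers all prime powers $q$ with $n\geq 8$ satisfying the stated divisibility conditions.

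For $n=7$ with $q \geq 5.259\cdot 10^{15}$, the sufficient direction is exactly Proposition \ref{casen7}, so the theorem in this range follows by combining it with the necessary conditions reformulated in the first paragraph. The main obstacle, already handled across the earlier propositions, is not analytic but organizational and computational: one must check that the union of the regimes treated by Propositions \ref{n12}, \ref{casen11-8}, \ref{casen8-9}, and \ref{casen7} exhausts all admissible pairs $(q,n)$, and that the technical hypotheses in each (such as $q\ge 5$, the sieve-bound computations, and the SageMath verifications for the finitely many remaining pairs) align with the necessary conditions $6\mid (q^n-1)$ and $\gcd(q^3-q,n)\neq 1$. Once this bookkeeping is verified, the theorem follows immediately from the propositions.
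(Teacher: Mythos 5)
Your proposal is correct and matches the paper's approach: the paper offers no separate proof, presenting this theorem explicitly as a summary of Propositions \ref{n12}, \ref{casen11-8}, \ref{casen8-9} and \ref{casen7}, with the necessity direction coming from the same observations you cite (divisibility of $q^n-1$ by $2$ and $3$ for $r$-primitivity, and \cite[Lemma 3.1]{AN} for $2$-normality, which also makes $q\ge 5$ automatic since $\gcd(q,6)=1$). Your case split and bookkeeping are exactly what is needed.
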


We conjecture that $(q,7)\in A$ for all prime power $q$
such that $q \equiv 0, \pm 1 \pmod 7$
and $6 \mid (q-1)$, since procedure \textbf{\ref{totalsieve}($q,n$)} 
holds for all prime power $q<10^6$.

\section*{Appendix A: Procedures in SageMath}


{\scriptsize
\begin{procedure}
\KwIn{A real number $t>0$}
\KwOut{The constant $A_t$}
$p \gets 2$\\
$A \gets 1$\\
\While{$p<2^t$}
{
	$A \gets A \cdot \frac{2}{\sqrt[t]{p}}$\\
	$p \gets $ the next prime after $p$
}
\Return $A$
\caption{Constant($t$)}\label{At}
\end{procedure}
}

{\scriptsize
	\begin{procedure}
		\KwIn{A prime power $q$ and a positive integer $n$}
		\KwOut{a pair $(w_1,w_2)$ or $\emptyset$}
		$w \gets$ number of monic irreducible factors of $x^n-1$ in $\mathbb{F}_q[x]$\\
		\uIf{$\gcd(q,n)>1$}
			{$Pair \gets (w,w)$}
		\uElseIf{$\gcd(q-1,n)>1$}
			{$Pair \gets (w-1,w-2)$}
		\uElseIf{$\gcd(q+1,n)>1$}
			{$Pair \gets (w-1,w-1)$}
		\Else{$Pair \gets \emptyset$}
        \Return $Pair$
		\caption{NumberPolFactors($q,n$)}\label{valw1w2}
	\end{procedure}
}

{\scriptsize
\begin{procedure}
\KwIn{A prime power $q$, a positive integer $n$ and a real number $t>0$}
\KwOut{True or False}
$Pair \gets$\ref{valw1w2}($q,n$)\\
\uIf{$Pair \neq \emptyset$}
    {
    $(w_1,w_2)\gets Pair$\\
    $A\gets \mathrm{Constant}(t)$\\
	$res\gets \left[ q^{n/2-3}\ge 6^{2-1/t}\cdot A^2 \cdot q^{2\cdot n/t}\cdot 2^{w_1+w_2}\right]$
    }
\Else{$res \gets$ False}
\Return $res$
\caption{TestTheorem($q,n,t$)}\label{testAt}
	\end{procedure}
}

{\scriptsize
\begin{procedure}
\KwIn{A positive integer $T$ and a prime number $p_0$}
\KwOut{A pair $(S,u_0)$ where $S$ is a positive real number and $u_0$ is a positive integer}
$p \gets p_0$\\
$p_1 \gets p$\\
$S \gets 0$\\
$u_0 \gets 0$\\
\While{$T\ge p$ and $p<1000$}
	{
	\uIf{$p$ divides $T$}
		{
		$T \gets \frac{T}{p}$\\
		\If{$p=p_1$}
			{
			$S\gets S+ \frac{1}{p}$\\
			$u_0 \gets u_0+1$
			}
		$p_1 \gets $ next prime after $p$
		}
	\Else{
		$p \gets $ next prime after $p$\\
		$p_1 \gets p$
		}
	}
$p \gets p_1$\\
\While{$p<T$}
	{
	$S\gets S+ \frac{1}{p}$\\
	$u_0 \gets u_0+1$\\
	$T \gets T/p$\\
	$p \gets $ next prime after $p$
	}
\Return $(S,u_0)$
\caption{SumFactors($T,p_0$)}\label{sumfactors}
	\end{procedure}
}

{\scriptsize
\begin{procedure}
\KwIn{A prime power $q$, a positive integer $n$ and a prime number $p_0$}
\KwOut{True or False}
$Pair \gets$ \ref{valw1w2}($q,n$)\\
\uIf{$Pair \neq \emptyset$}
	{
	$(w_1,w_2)\gets Pair$\\
	$T \gets q^n-1$\\
	$p \gets 2$\\
	$\ell \gets 1$\\
	\While{$p<p_0$}
		{
		\uIf{$p \mid T$}
			{
			$T \gets \frac{T}{p}$\\
			$\ell \gets \ell \cdot p$
			}
		\Else{$p \gets $ next prime after $p$}
		}
	$p \gets p_0$\\
	$w\ell_1 \gets$ number of prime divisors of $\frac{\ell}{2}$\\ 
	$w\ell_2 \gets$ number of prime divisors of $\frac{\ell}{3}$\\
	$(S,u_0) \gets $ \ref{sumfactors}$(T,p_0)$\\
	$\delta \gets 1 - 2 S$\\
	\uIf{$\delta>0$}
		{
		$\Delta \gets 2+\frac{2\cdot u_0-1}{\delta}$  \nllabel{Delta}\\
		$res\gets \left[ q^{n/2-3}\ge 6^2\cdot \Delta \cdot 2^{w_1+w_2+w\ell_1+w\ell_2}\right]$
		}
	\Else{$res \gets$ False}
	}
\Else{$res \gets$ False}
\Return $res$
\caption{SpecialSieve($q,n,p_0$)}\label{specialsieve}
\end{procedure}
}

{\scriptsize
\IncMargin{1em}
\begin{procedure}
\KwIn{A prime power $q$ and a positive integer $n$}
\KwOut{a pair $(G_1,G_2)$ or $\emptyset$}
$\{g_1,\ldots ,g_s\} \gets$ list of monic irreducible factors of $x^n-1$ in $\mathbb{F}_q[x]$ ordered by degree\\
\uIf{$\gcd(q,n)>1$}
	{
	$G_1 \gets \{g_1,\ldots ,g_s\}$\\
	$G_2 \gets \{g_1,\ldots ,g_s\}$}
\uElseIf{$\gcd(q-1,n)>1$}
	{
	$G_1 \gets$ list of monic irreducible factors of $\frac{x^n-1}{g_1\cdot g_2}$
	in $\mathbb{F}_q[x]$ ordered by degree\\
	$G_2 \gets$ list of monic irreducible factors of $\frac{x^n-1}{g_1}$
	in $\mathbb{F}_q[x]$ ordered by degree\\
	}
\uElseIf{$\gcd(q+1,n)>1$}
	{
	$G_1 \gets$ list of monic irreducible factors of $\frac{x^n-1}{g_2}$
	in $\mathbb{F}_q[x]$ ordered by degree\\
	$G_2 \gets$ list of monic irreducible factors of $\frac{x^n-1}{g_1}$
	in $\mathbb{F}_q[x]$ ordered by degree\\
	}
\Else{$Pair \gets \emptyset$}
\uIf{$Pair=0$}
	{\Return $(G_1,G_2)$}
\Else{\Return $Pair$}
\caption{MonicFactors($q,n$)}\label{monicfactors}
	\end{procedure}
}

{\scriptsize
\begin{procedure}
\KwIn{Non-negative integers $i_1,i_2,j_1,j_2$ and lists $L_1,L_2,G_1,G_2$}
\KwOut{Lists $B_1,C_1,B_2,C_2,H_1,K_1,H_2,K_2$}
$Cond \gets$ True\\
$B_1\gets$ first $i_1$ elements of $L_1$\\
$C_1\gets$ last $\mathrm{len}(L_1)-i_1$ elements of $L_1$\\
$B_2\gets$ first $i_2$ elements of $L_2$\\
$C_2\gets$ last $\mathrm{len}(L_2)-i_2$ elements of $L_2$\\
$H_1\gets$ first $j_1$ elements of $G_1$\\
$K_1\gets$ last $\mathrm{len}(G_1)-j_1$ elements of $G_1$\\
$H_2\gets$ first $j_2$ elements of $G_2$\\
$K_2\gets$ last $\mathrm{len}(G_2)-j_2$ elements of $G_2$\\
\Return $(B_1,C_1,B_2,C_2,H_1,K_1,H_2,K_2)$
\caption{ListSieve($i_1,i_2,j_1,j_2,L_1,L_2,G_1,G_2$)}\label{listsieve}
	\end{procedure}
}

{\scriptsize
\begin{procedure}
\KwIn{A prime power $q$ and a positive integer $n$}
\KwOut{True or False}
$Cond \gets$ True\\
$L_1 \gets$ ordered list of prime divisors of $\frac{q^n-1}{2}$\\
$L_2 \gets$ ordered list of prime divisors of $\frac{q^n-1}{3}$\\
$Pair \gets$ \ref{monicfactors}($q,n$)\\
\uIf{$Pair \neq \emptyset$}
	{
	$(G_1,G_2) \gets Pair$\\
	$i_1,i_2,j_1,j_2 \gets 0$\\
	\While{$i_1\le \mathrm{len}(L_1)$ and $Cond$}
		{
		$(B_1,C_1,B_2,C_2,H_1,K_1,H_2,K_2)\gets$ \ref{listsieve}($i_1,i_2,j_1,j_2,L_1,L_2,G_1,G_2$)\\
		$\delta \gets 1 - \displaystyle
							\left(\sum_{p \in C_1} \frac{1}{p}+
		                        \sum_{p \in C_2} \frac{1}{p}+
		                        \sum_{g \in K_1} \frac{1}{q^{\deg g}} +
		                        \sum_{g \in K_2} \frac{1}{q^{\deg g}} \right)$ \\
		\uIf{$\delta>0$}
		    {
		    $\Delta\gets 2 + \dfrac{\mathrm{len}(C_1)+\mathrm{len}(C_2)+
		    	                    \mathrm{len}(K_1)+\mathrm{len}(K_2)-1}
	    	                   		{\delta}$\\
			$res \gets \left[
						q^{n/2-3}\ge 6 \cdot \Delta \cdot
												2^{\mathrm{len}(B_1)+\mathrm{len}(B_2)+
												\mathrm{len}(H_1)+\mathrm{len}(H_2)}
						\right]$\\
			$Cond \gets $ not $res$
			}
		\Else{$res \gets False$}
		$j_2\gets j_2+1$\\
		\If{$j_2>\mathrm{len}(G_2)$}
			{
			$j_2\gets 0$\\
			$j_1\gets j_1+1$\\
			\If{$j_1>\mathrm{len}(G_1)$}
				{
				$j_1\gets 0$\\
				$i_2\gets i_2+1$\\
				\If{$i_2>\mathrm{len}(L_2)$}
					{
					$i_2\gets 0$\\
					$i_1\gets i_1+1$
					}
				}
			}
		}
	}
\Else{$res \gets$ False}
\Return $res$
\caption{TotalSieve($q,n$)}\label{totalsieve}
	\end{procedure}
}

{\scriptsize
\begin{procedure}
\KwIn{Maximal and minimal bounds $q_{max},q_{min}$, a positive integer $n$ and a prime number $p_0$}
\KwOut{A pair $(q_{new},B)$ where $q_{new}$ is a new bound and $B$ is boolean}
$(p,a,n_0) \gets$ given by $n=p^a n_0$ where $p$ is the greatest prime divisor of $n$ and $\gcd(n,n_0)=1$\\
\uIf{$p$ is odd}{$\bar{p}\gets 2p^a$}
\Else{$\bar{p}\gets 2^a$}
$(e_1,e_2) \gets$ given as in the proof of Lemma \ref{proc-boundsieve} \\
$m_{max} \gets$ number of prime numbers less than $p_0$ which are not of the form $\bar{p}j+1$\\
$B \gets$ true\\
$Bound \gets \emptyset$\\
\For{$m \in \{ 2,\ldots, m_{max}\}$}
{
	$\mathcal{P}_0 \gets$ product of the first $m$ prime numbers which are not of the form $\bar{p}j+1$\\
	$(u_1,p_1,\mathcal{P}_{1},\mathcal{S}_{1}) \gets (0,p_0-1,1,0)$\\
	\While{$e_1 \cdot \mathcal{P}_0\cdot \mathcal{P}_{1}\le q_{max}^{\frac{n}{p}}-1$}
	{
		$(u_2,p_2,\mathcal{P}_{2},\mathcal{S}_{2}) \gets (0,2,1,0)$\\
		\While{$e_2 \cdot \mathcal{P}_0\cdot \mathcal{P}_{1}\cdot \mathcal{P}_{2} \le q_{max}^{n}-1$}
		{
			$\delta \gets 1 - 2(\mathcal{S}_{1}+ \mathcal{S}_{2}) - \frac{2n-3}{q_{min}}$\\
			\uIf{$\delta\le 0$}{$B\gets$ false}
				\Else{$\Delta \gets 2 + \frac{2(u_1+u_2)+2n-4}{\delta}$\\
						$Val \gets (6^2\cdot \Delta \cdot 2^{2m})^{\frac{2}{n-6}}$\\
						add $Val$ to the set $Bound$}
					$p_2\gets$ next prime after $p_2$ which is of the form $\bar{p}j+1$\\
					$(u_2,\mathcal{P}_2,\mathcal{S}_2) \gets (u_2+1,\mathcal{P}_2\cdot p_2,\mathcal{S}_2+\frac{1}{p_2})$
				}
				$p_1\gets$ next prime after $p_1$ which is not of the form $\bar{p}j+1$\\
				$(u_1,\mathcal{P}_1,\mathcal{S}_1) \gets (u_1+1,\mathcal{P}_1\cdot p_1,\mathcal{S}_1+\frac{1}{p_1})$ 
			}
		}
		$q_{new} \gets$ maximum value of the set $Bound$\\
		\Return $(q_{new},B)$
		\caption{BoundSieve($q_{\textrm{min}},q_{\textrm{max}},n,p_0$)}\label{boundsieve}
	\end{procedure}
}

\end{document}